\documentclass[11pt]{article}

\usepackage[a4paper, total={6in, 8in}]{geometry}
\usepackage{url}
\usepackage[charter]{mathdesign}
\usepackage[T1]{fontenc}

\usepackage{amsmath,amsthm,amsfonts}
\usepackage{enumitem}
\usepackage{mathtools}
\usepackage{comment}
\usepackage{authblk}
\makeatletter
\title{The intersection of the subgroups of finite $p$-index in a multiple HNN-extension of an infinite cyclic group}

\author{V. Metaftsis} 
\author{D. Tsipa\thanks{%
		\protect\parbox[c]{0.18\linewidth}{%
			\protect\includegraphics[width=\linewidth]{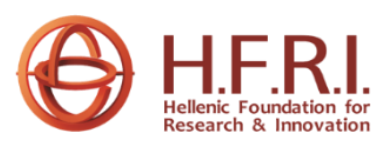}%
		}%
		\hfill%
		\protect\parbox[c]{0.74\linewidth}{%
				The research work was supported by the Hellenic Foundation for
			Research Innovation (HFRI) under the 3rd Call for HFRI PhD
			Fellowships. (Fellowship Number: 5161)%
		
		}%
}}
\affil[ ]{Department of Mathematics, University of the Aegean, Karlovassi, 832 00 Samos, Greece}
\affil[ ]{\textit {vmet@aegean.gr, dtsipa@aegean.gr}}
\date{}

\usepackage[overload]{empheq}
\theoremstyle{theorem} 
\newtheorem{theorem}{Theorem}[section]
\newtheorem{corollary}[theorem]{Corollary}
\newtheorem{lemma}[theorem]{Lemma}
\newtheorem*{lemma*}{Lemma}
\newtheorem*{theorem*}{Theorem}
\newtheorem*{corollary*}{Corollary}

\newtheorem*{proposition*}{Proposition}
\theoremstyle{definition}

\newtheorem*{example*}{Example}    

\newtheorem*{remark*}{Remark}

\newcommand{\longeq}{\scalebox{2}[1]{=}}

\newcommand{\N}{\mathbb{N}}
\newcommand{\Z}{\mathbb{Z}}

\newcommand{\om}{\omega}

\newcommand\myex{\mathrel{\stackrel{\makebox[0pt]{\mbox{\normalfont\scriptsize($3$)}}}{\longeq}}}
\def\g{\gamma}

\def\e{\epsilon}

\def\om{\omega}
\def\k{\kappa}
\def\l{\lambda}
\def\BS{\mathrm{BS}}
\def\w{\omega}
\def\<{\langle}
\def\>{\rangle}

\begin{document}
		\maketitle
		\vspace{-5mm}
		\begin{abstract}
		Let $G$ be a multiple HNN-extension of an infinite cyclic group. We will calculate the intersection  $(N_{p}){_\om}(G)$ of the normal subgroups of finite $p$-index in $G$  thus generalizing the result of Moldavanskii in \cite{Mold-ru} for Baumslag-Solitar groups. As a corollary we give necessary and sufficient conditions for $G$ to be residually finite $p$-group.
	\end{abstract}

\section{Introduction}

Let $G$ be a group and $\mathcal{P}$ be a  property. We say that $G$ is residually $\mathcal{P}$ if  for every non-trivial element $g\in G$ there is a homomorphism $f$ from $G$ to a group with property $\mathcal{P}$ such that the image of $g$ under $f$ is also non-trivial. In the present note we consider groups with $\mathcal{P}$ to be finite $p$-group. Then the above definition is equivalent to the following. A group $G$ is residually finite $p$- group if and only if $(N_{p})_\omega(G)$ is trivial, where  $(N_{p})_\omega(G)$ denotes the intersection of all finite index normal subgroups of $G$ with index some power of the prime number $p$.

The purpose of this note is to calculate the intersection $(N_{p})_\omega(G)$, when $G$ is a multiple HNN-extension of an infinite cyclic group. This type of group can also be considered as a Generalized Baumslag-Solitar group where the underlying graph is a rose graph (a multiple loop).
Generalized Baumslag-Solitar groups (GBS-groups) are fundamental groups of finite graphs of groups with infinite cyclic vertex and edge groups. These groups have interesting group-theoretic properties and have been the subject of recent research (see for example \cite{Delga, Fore,LevittA, Robi}).

In case the underlying graph is a simple loop we have Baumslag-Solitar groups that is,  $1$-relator groups of the form
$$BS(n,m)=\langle t, a \hspace{1mm}|\hspace{1mm}ta^{n}t^{-1}=a^{m}\rangle$$
where $m, n$ are non-zero integers. 

When $|n|=|m|=1$ , $BS(n,m)$ is called an  elementary Baumslag-Solitar group and is the fundamental group of the torus or the Klein bottle. These groups are well understood and can therefore be regarded as standing somewhat apart from the remaining Baumslag-Solitar groups. Moreover $\BS(n, m)$, $\BS(m, n)$ and $\BS(-n, -m)$ are pairwise isomorphic, we may assume, without loss of generality, that the integers $n$ and $m$ in the presentation of $\BS(n, m)$ satisfy the condition $0 < n \leq |m|$. For the rest of the paper we assume that each Baumslag-Solitar group $BS(n_{i},m_{i})$ is non-elementary and $0 < n_{i} \leq |m_{i}|$.

 Baumslag-Solitar groups seem to have first appeared in the literature in \cite{BS} as they were defined by Gilbert Baumslag and Donald Solitar in order to provide examples of non-Hopfian groups. These groups have played a really useful role in combinatorial and geometric group theory. In several situations they have provided examples which mark boundaries between different classes of groups and also they often provide a test for various theories.

 Moldavanskii in \cite{Mold-ru} (see also \cite{Mold}) proved the following.
\begin{theorem*}\label{Molda}[Moldavanskii]\ \\
	Let $G=BS(m,n)$, $p$ be a prime number and let $m=p^rm_1$ and $n=p^sn_1$ where $r,s\ge 0$ and $m_1,n_1$ are not divisible by $p$. Let also $d={\rm gcd}(m_1,n_1)$, $m_1=du$ and $n_1=dv$. Then
	\begin{enumerate}
		\item If $r\neq s$ or if $m_1\not\equiv n_1 \pmod p$, then $(N_{p})_{\om}(G)$ coincides with the normal closure of $a^{p^\xi}$ in $G$,  where $\xi=\min\{r,s\}$.
		\item If $r=s$ and $m_1\equiv n_1 \pmod p$, then $(N_{p})_{\om}(G)$ coincides with the normal closure  of the set $\{t^{-1}a^{p^ru}ta^{-p^{r}v}, [t^ka^{p^r}t^{-k},a]: k\in{\mathbb Z}\}$ in $G$.
	\end{enumerate}
\end{theorem*}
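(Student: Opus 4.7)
The plan is to establish the theorem by a double inclusion, writing $K$ for the normal closure specified on the right-hand side. The containment $K \subseteq (N_{p})_{\om}(G)$ amounts to showing that each generator of $K$ lies in every normal subgroup $N$ of $G$ of $p$-power index, while the reverse containment $(N_{p})_{\om}(G)\subseteq K$ is equivalent to the assertion that the quotient $G/K$ is residually a finite $p$-group.

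For the first containment, fix such an $N$ and denote images in $G/N$ by bars. Since $G/N$ is a finite $p$-group, $\bar a$ has order $p^{\ell}$ for some $\ell\geq 0$. The defining relation $\bar t\bar a^{n}\bar t^{-1}=\bar a^{m}$ forces $\bar a^{n}$ and $\bar a^{m}$ to have the same order in the cyclic group $\langle\bar a\rangle$; an elementary divisibility count immediately yields $\ell\leq\xi$ when $r\neq s$. When $r=s$, the fact that $\gcd(n_{1},p)=\gcd(m_{1},p)=1$ makes both $\bar a^{p^{r}n_{1}}$ and $\bar a^{p^{r}m_{1}}$ generators of the characteristic cyclic subgroup $\langle\bar a^{p^{r}}\rangle$, so $\bar t$ normalises that subgroup and induces there an automorphism $x\mapsto x^{k}$ of $p$-power order. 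Every such $k$ satisfies $k\equiv 1\pmod p$, and the defining relation then gives $kv\equiv u\pmod{p^{\ell-r}}$ after cancelling the prime-to-$p$ factor $d$. In case~(1) this forces $m_{1}\equiv n_{1}\pmod p$, a contradiction unless $\ell\leq r$, so $a^{p^{r}}\in N$. In case~(2) the same computation simultaneously shows that each $\bar t^{j}\bar a^{p^{r}}\bar t^{-j}$ lies in the abelian group $\langle\bar a\rangle$ (so the commutator generators of $K$ belong to $N$) and that $\bar t\bar a^{p^{r}v}\bar t^{-1}=\bar a^{p^{r}u}$ (so the remaining generator belongs to $N$).

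For the reverse containment, in case~(1) both $m$ and $n$ are divisible by $p^{\xi}$, so killing $a^{p^{\xi}}$ trivialises the defining relation and $G/K\cong\Z/p^{\xi}\ast\Z$; this is residually a finite $p$-group by the classical theorem of Gruenberg on free products of residually-$p$ groups. Case~(2) is the main obstacle. Setting $b_{k}:=t^{k}a^{p^{r}}t^{-k}$, the subgroup $B=\langle a,\,b_{k}:k\in\Z\rangle\leq G/K$ is abelian: the commutator relations give $[a,b_{k}]=1$, and conjugating these by $t^{j}$ together with $b_{j}=(t^{j}at^{-j})^{p^{r}}$ yields $[b_{i},b_{j}]=1$. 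Within $B$ one has the recursion $b_{k+1}^{v}=b_{k}^{u}$, and $u\equiv v\pmod p$. The plan is then to identify $G/K$ as an HNN-extension of $B$ along the shift $b_{k}\mapsto b_{k+1}$ induced by $t$, and to establish its residual $p$-ness by constructing an exhaustive system of characteristic finite $p$-quotients of $B$ on which the shift descends---exploiting that $u/v$ is a $1$-unit in $\Z_{p}$---from which residual $p$-ness of the HNN-extension follows by a standard filtration argument on the Bass--Serre tree.
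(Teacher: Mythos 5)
Your argument for the containment $K\subseteq (N_{p})_{\om}(G)$ is correct and complete, and is in fact more elementary than the route taken for the analogous step in this paper (which descends through the lower central series of the finite $p$-quotient): the order count on $\bar a^{n}$ and $\bar a^{m}$ when $r\neq s$, and the observation that an automorphism of $p$-power order of a cyclic $p$-group is $x\mapsto x^{k}$ with $k\equiv 1\pmod p$ when $r=s$, do yield exactly what is needed in both cases. The reverse containment in case (1) is also fine and agrees with the paper: killing $a^{p^{\xi}}$ trivialises the relation, $G/K\cong \Z/p^{\xi}\ast\Z$, and one invokes Gruenberg.

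The gap is in case (2) of the reverse containment, which is the technical heart of the theorem; there your text is a plan rather than a proof, and each of its three steps hides genuine content. First, the identification of $G/K$ as an HNN-extension of the abelian group $B=\langle a,b_{k}\rangle$ along the shift must itself be proved: a priori $G/K$ is $L\rtimes\Z$ where $L$ is the normal closure of $a$, generated by all $a_{j}=t^{j}at^{-j}$, and the relations coming from $K$ only centralise the powers $a_{j}^{p^{r}}$, not the $a_{j}$ themselves, so $L$ need not be abelian and one must check that $B$ satisfies no relations beyond those you list and that the associated subgroups are exactly $\langle b_{k}\rangle$. Second, producing finite $p$-quotients of $B$ on which the shift descends is not enough; one must show the induced automorphism of $B/\langle a^{p^{s}}\rangle$ has order a power of $p$ (the remark that $u/v$ is a $1$-unit is the right starting point, but the actual statement is Lemma \ref{aut} here, proved via Kummer's theorem), and then pass to a finite $p$-group containing the quotient, which is Lemma \ref{genRV}. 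Third, and most seriously, "a standard filtration argument on the Bass--Serre tree" does not deliver residual $p$-ness: the crux is to prove that the kernels of the maps $G/K\to H_{s}:=(G/K)/\langle a^{p^{s}}\rangle^{G/K}$ intersect trivially, i.e.\ that an element of $\langle a^{p^{s_{0}}}\rangle^{G/K}$ dying in every $H_{s}$ is already trivial in $G/K$. In the paper this is the long final computation: such an element is put in the normal form $\prod_{i}t^{k_{i}}a^{\epsilon_{i}p^{s_{0}}}t^{-k_{i}}$, its image in $H_{s}$ is evaluated as $a^{p^{s_{0}}q(x_{s})}$, and the vanishing of $q(x_{s})$ modulo all $p^{s-s_{0}}$ is shown, via the gcd/diophantine analysis of Lemma \ref{dioph} and the defining relations of $K$, to force triviality. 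Without an equivalent of this separation step, case (2) is not established.
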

The following corollary characterizes exactly when a Baumslag-Solitar group $BS(m,n)$ is a residually finite $p$-group.

\begin{corollary}\label{BS}
	The Baumslag-Solitar group $BS(m,n)$ is residually finite $p$-group for some prime number $p$ if and only if $m=1$ and $n\equiv 1\pmod p$ or $n=m$ and $m=p^r$ or $p=2$ and $n=-m=-2^r$, $r\ge 1$.
\end{corollary}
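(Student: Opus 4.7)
The plan is to apply Moldavanskii's theorem to $G=BS(m,n)$, since $G$ is residually finite $p$-group if and only if $(N_p)_\omega(G)=1$. First I would rule out case (1) of the theorem: there the intersection equals the normal closure of $a^{p^\xi}$ in $G$, which is non-trivial because $a$ has infinite order in $G$ (the subgroup $\langle a\rangle$ embeds faithfully in the HNN extension). Hence residual $p$-finiteness forces case (2), so $r=s$ and $m_1\equiv n_1\pmod p$. In that case, $(N_p)_\omega(G)=1$ is equivalent to requiring that both generating elements $t^{-1}a^{p^r u}ta^{-p^r v}$ and $[t^k a^{p^r}t^{-k},a]$, $k\in\Z$, be trivial in $G$ itself.

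For the first generator, Britton's lemma on the HNN extension $G$ shows that $t^{-1}a^{p^r u}t$ is a reduced HNN word unless $a^{p^r u}$ lies in the associated subgroup $\langle a^m\rangle=\langle a^{p^r du}\rangle$; this holds precisely when $d=1$. When $d=1$, the relation $t^{-1}a^{p^r u}t=a^{p^r v}$ is exactly the defining relation of $G$, so the generator vanishes automatically; when $d>1$ it is non-trivial and obstructs residual $p$-finiteness. Assuming $d=1$, I next analyze the commutator $[ta^{p^r}t^{-1},a]=ta^{p^r}t^{-1}\cdot a\cdot ta^{-p^r}t^{-1}\cdot a^{-1}$ by Britton's lemma: its outer pinches $ta^{\pm p^r}t^{-1}$ reduce iff $v=\pm 1$, and its inner pinch $t^{-1}at$ reduces iff $m=\pm 1$, so for the word to be trivial we need $v=\pm 1$ or $m=\pm 1$. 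The symmetric analysis at $k=-1$ yields $u=\pm 1$ or $n=\pm 1$.

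Combining these constraints with $\gcd(u,v)=1$ and $m_1\equiv n_1\pmod p$ leaves exactly the three families of the corollary. When $|u|=|v|=1$, the congruence $u\equiv v\pmod p$ splits into the sub-case $u=v$, giving $m=n=\pm p^r$ for any prime $p$ and corresponding to the second family up to the isomorphism $BS(m,n)\cong BS(-m,-n)$, and the sub-case $u=-v$, which forces $p=2$ (since $p\mid 2v$ while $p\nmid v$) and yields $m=-n=\pm 2^r$; here $r\ge 1$ is imposed to avoid the elementary Klein bottle group. When instead $m=\pm 1$ (so necessarily $r=0$), the congruence becomes $n\equiv 1\pmod p$ and we obtain $BS(1,n)$ after absorbing signs; the dual case $n=\pm 1$ reduces to this one via $BS(m,n)\cong BS(n,m)$. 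For the converse direction, I would verify each family directly: in the first, the subgroup generated by $\{t^k a t^{-k}: k\in\Z\}$ is isomorphic to $\Z[1/n]$ and hence abelian; in the second, $a^{p^r}$ is central in $BS(p^r,p^r)$, so both generating families vanish; in the third, the defining relation gives $ta^{2^r}t^{-1}=a^{-2^r}$, whence every $t^k a^{2^r}t^{-k}$ equals $\pm a^{2^r}$ and commutes with $a$.

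The main obstacle is the Britton's lemma bookkeeping in the forward direction: one must verify carefully that, outside the three families above, at least one of the commutators $[t^k a^{p^r}t^{-k},a]$ remains a reduced HNN word of positive $t$-length and is therefore non-trivial in $G$. Once the pinch analysis is securely in place, the rest of the argument is routine unwinding of the conditions in Moldavanskii's theorem.
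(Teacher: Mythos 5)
Your proposal is correct. The paper gives no proof of this corollary (it only notes that it follows from Moldavanskii's theorem, and alternatively from Kim--McCarron), and your derivation --- ruling out case (1) because $a$ has infinite order, then using Britton's lemma to decide exactly when the generators of the normal closure in case (2) are already trivial in $G$, and verifying the three surviving families directly --- is the intended routine unwinding; just note that your pinch conditions implicitly use Moldavanskii's convention $t^{-1}a^{m}t=a^{n}$ for $BS(m,n)$ (the one under which the quoted theorem is actually true), which is opposite to the paper's displayed definition of $BS(n,m)$.
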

\begin{remark*}
	This result also follows from the study of Kim and McCarron in certain one relator groups (see \cite[Main Theorem]{kimc}).  
\end{remark*}
Our aim is to generalize the work done in  \cite{Mold-ru} and calculate the intersection  $(N_{p})_{\om}(G)$ for the multiple HNN-extension
$$G=\langle a, t_{1}, t_{2},\dots,t_{r}\hspace{1mm}|\hspace{1mm}t_{i}a^{n_{i}}t_{i}^{-1}=a^{m_{i}}, i=1,\dots,r\rangle$$
where $n_{i}, m_{i},$ are non-zero integers.
We actually prove the following. 

If $p$ is a prime number then let $m_{i}=p^{\sigma_{i}}\hat{m}_{i},$ $n_{i}=p^{\tau_{i}}\hat{n}_{i}$, where $\hat{m}_{i},\hat{n}_{i}\in\Z$ with $p\nmid \hat{m}_{i},\hat{n}_{i}$ and $\sigma_{i},\tau_{i}\in \N$. 

Also, let $d_{i}=gcd(\hat{m}_{i},\hat{n}_{i})$. Then $\hat{m}_{i}=d_{i}u_{i}$ and  $\hat{n}_{i}=d_{i}v_{i}$ with $gcd(u_{i},v_{i})=1$.
We define $\theta_{i}$ to be \vspace{2mm}
\[ \theta_{i} = \left\{ \begin{array}{ll}
	min\{\sigma_{i},\tau_{i}\} & \mbox{if   $\sigma_{i}\neq \tau_{i}$ or ($\sigma_{i}=\tau_{i}$ and $p\nmid(\hat{m}_{i}-\hat{n}_{i})$})\\
	\infty &  \mbox{if } \sigma_{i}=\tau_{i}   \mbox{ and }  p|(\hat{m}_{i}-\hat{n}_{i}) \end{array} \right. \] \vspace{2mm}
for all $i\in\{1,\dots,r\}$.

\vspace{1mm}
\begin{theorem*}\label{Np}
	Let $G$ be the group $$G=\langle a, t_{1},\dots,t_{r}\hspace{1mm}|\hspace{1mm}t_{i}a^{n_{i}}t_{i}^{-1}=a^{m_{i}}, i\in\{1,\dots,r\}\rangle$$ with $n_{i}$, $m_{i}$ as described above. 
	\begin{enumerate}
	
		\item[(1)] If there exists $ i\in \{1,\dots,r\}$ such that $\theta_{i}\in\N$ then $(N_p)_{\omega}(G)=\langle a^{p^{\xi}} \rangle^G$, where $\xi=min\{\theta_{i}\}_{i=1}^{r}$.
		\item[(2)] If $\theta_{i}=\infty$\hspace{2mm}for all $ i\in \{1,\dots,r\}$ then $(N_p)_{\omega}(G)$ is the normal closure of the set
$$ \Big{\{}[\gamma_{2}(F_{r}),a^{p^{\Sigma}}], [t_{1}^{k_{1}}\cdots t_{r}^{k_{r}}at_{r}^{-k_{r}}\cdots t_{1}^{-k_{1}},a^{p^{\Sigma}}], t_{1}^{k_{1}}\cdots t_{r}^{k_{r}}a^{p^{\Sigma}\frac{y_{1}\cdots y_{r}}{\delta}} t_{1}^{-k_{1}}\cdots t_{r}^{-k_{r}}a^{-p^{\Sigma}\frac{\bar{y}_{1}\cdots \bar{y}_{r}}{\delta}}\Big{\}}$$
				 in $G$,  where $F_{r}$ is the free group generated by $t_{1},\dots,t_{r}$, $k_{i}\in \Z$, $\Sigma= \sum\limits_{i=1}^{r}\sigma_{i}$,  $\delta=\delta_{\{k_{i}\}_{i=1}^{r}}=gcd(y_{1}\cdots y_{r},\bar{y}_{1}\cdots \bar{y}_{r})$ and 
		
$$y_{i} = \left\{ \begin{array}{ll}
				u_{i}^{|k_{i}|} & \text{if}\hspace{3mm}   k_{i}>0 \\
				v_{i}^{|k_{i}|} &   \text{if}\hspace{3mm}   k_{i}<0\\
				1 &  \text{if}\hspace{3mm}  k_{i}=0 \end{array} \right.  \hspace{1cm} \& \hspace{1cm}  \bar{y}_{i} = \left\{ \begin{array}{ll}
				v_{i}^{|k_{i}|} & \text{if}\hspace{3mm}   k_{i}>0 \\
				u_{i}^{|k_{i}|} &   \text{if}\hspace{3mm}   k_{i}<0\\
				1 &  \text{if}\hspace{3mm}  k_{i}=0 \end{array} \right. $$

%
		for all $i\in\{1,\dots,r\}$.
	\end{enumerate}
\end{theorem*}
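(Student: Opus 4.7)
Write $N = (N_p)_\omega(G)$ and let $K$ denote the normal closure described in the statement. The plan is to prove the inclusions $K\subseteq N$ and $N\subseteq K$ separately. For $K\subseteq N$, fix a surjection $\pi\colon G\to P$ onto a finite $p$-group and write $\bar a=\pi(a)$, $\bar t_i=\pi(t_i)$, with $\bar a$ of order $p^k$. The central observation is that whenever $\sigma_i=\tau_i$, the HNN relation forces $\bar t_i$ to normalise the cyclic subgroup $\langle\bar a^{p^{\sigma_i}}\rangle$ and act on it by multiplication by a unique $c_i\equiv u_iv_i^{-1}\pmod{p^{k-\sigma_i}}$; whereas if $\sigma_i\neq\tau_i$, the equality of the orders of the conjugate elements $\bar a^{n_i}$ and $\bar a^{m_i}$ forces $k\leq\min(\sigma_i,\tau_i)$.

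In case (1), pick $j$ with $\theta_j=\xi$. The subcase $\sigma_j\neq\tau_j$ yields $k\leq\xi$ directly; when $\sigma_j=\tau_j=\xi$ with $p\nmid\hat m_j-\hat n_j$, the $p$-power order of $\bar t_j$ forces $c_j$ to be $p$-torsion in $(\mathbb{Z}/p^{k-\sigma_j})^\times$, hence $c_j\equiv 1\pmod p$, giving $u_j\equiv v_j\pmod p$, which contradicts the hypothesis unless $k\leq\xi$. Thus $\pi(a^{p^\xi})=1$ and $\langle a^{p^\xi}\rangle^G\subseteq N$. In case (2), since $\Sigma\geq\sigma_i$, each $\bar t_i$ also normalises the characteristic subgroup $\langle\bar a^{p^\Sigma}\rangle$, which together with the action of $\bar a$ makes it normal in $P$; the induced map $P\to\mathrm{Aut}(\langle\bar a^{p^\Sigma}\rangle)$ lands in an abelian group, so $[P,P]$ centralises $\bar a^{p^\Sigma}$, handling the type-1 generators. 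For $\bar w=\bar t_1^{k_1}\cdots\bar t_r^{k_r}$ acting on $\langle\bar a^{p^\Sigma}\rangle$ by multiplication by $C=\prod_i c_i^{k_i}$, the computation
$$(\bar w\bar a\bar w^{-1})\,\bar a^{p^\Sigma}\,(\bar w\bar a^{-1}\bar w^{-1})=\bar w\,\bar a\,\bar a^{C^{-1}p^\Sigma}\,\bar a^{-1}\,\bar w^{-1}=\bar a^{p^\Sigma}$$
handles the type-2 generators, and the identification $C\equiv (y_1\cdots y_r)/(\bar y_1\cdots\bar y_r)\pmod{p^{k-\Sigma}}$ then implies the type-3 relations by a short direct calculation.

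For the reverse inclusion $N\subseteq K$, the plan is to show $G/K$ is residually a finite $p$-group. In case (1), $G/\langle a^{p^\xi}\rangle^G$ is the fundamental group of a finite graph of finite cyclic $p$-groups (the HNN relations stay consistent because $\min(\sigma_i,\xi)=\min(\tau_i,\xi)$ for every $i$), and such groups are residually finite $p$-group by classical results on graphs of $p$-groups. In case (2), I construct for each non-trivial $\bar g\in G/K$ a finite $p$-group quotient of $G/K$ not killing $\bar g$; the type-1 and type-2 relations make $\bar a^{p^\Sigma}$ almost central in $G/K$ (centralised by $[G,G]$ and by every conjugate of $\bar a$ by $t_1^{k_1}\cdots t_r^{k_r}$), enabling an inductive scheme of successive central-by-$p$ extensions whose kernels intersect trivially.

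The main obstacle is case (2) of the reverse inclusion: the construction of finite $p$-group quotients of $G/K$ must simultaneously realise the conjugation actions of all $r$ stable letters $t_i$ in a single finite $p$-group consistent with the type-3 relations. This generalises Moldavanskii's inductive argument from the single-stable-letter setting, and is the point at which the combinatorial content of the statement, in particular the precise form of the exponents $y_1\cdots y_r/\delta$ and $\bar y_1\cdots\bar y_r/\delta$, becomes essential.
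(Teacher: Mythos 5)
Your forward inclusion $K\subseteq N$ is essentially sound, and in case (1) your order-comparison argument (equal orders of the conjugate elements $\bar a^{n_i}$ and $\bar a^{m_i}$ force $k\le\min(\sigma_i,\tau_i)$ when $\sigma_i\neq\tau_i$; an element of $p$-power order in $(\Z/p^{\ell})^{\times}$ is congruent to $1$ mod $p$) is cleaner than the paper's descent through the lower central series of the quotient. You should, however, pin down the orientation of the action: the relation $t_ia^{n_i}t_i^{-1}=a^{m_i}$ makes conjugation by $\bar t_i$ multiply exponents on $\langle\bar a^{p^{\sigma_i}}\rangle$ by $u_iv_i^{-1}$, so $C=y/\bar y$ rather than $\bar y/y$, and the type-3 relation then comes out with the two exponents in the opposite order from the one you assert; this bookkeeping must be made consistent before the ``short direct calculation'' closes.

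The genuine gap is the reverse inclusion $N\subseteq K$, which is where nearly all of the content lies. In case (1) you invoke ``classical results on graphs of $p$-groups'' to conclude that $G/\langle a^{p^{\xi}}\rangle^{G}$ is residually finite $p$; no such general result exists. The group $\langle a,t\mid a^{3}=1,\ tat^{-1}=a^{-1}\rangle$ is the fundamental group of a finite graph of finite cyclic $3$-groups, yet in any finite $3$-group quotient $\bar t$ induces an automorphism of $\langle\bar a\rangle$ whose order divides both a power of $3$ and $2$, forcing $\bar a=1$; so the group is not residually finite $3$. What actually saves the quotient is the specific arithmetic of the surviving edges ($\sigma_i=\tau_i$ and $p\mid\hat m_i-\hat n_i$), under which the induced automorphisms of the cyclic $p$-group have $p$-power order and generate a finite $p$-subgroup of the automorphism group --- this is the content of Lemmas \ref{genRV}, \ref{aut} and \ref{corRFp}, proved via Kummer's theorem, and you need that lemma, not a general principle. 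In case (2) you explicitly defer ``the main obstacle,'' but that obstacle is the theorem. The paper's route is concrete: set $\bar G=G/K$, map onto $H_{s}=\bar G/\langle a^{p^{s}}\rangle^{\bar G}$ for $s>\Sigma$ (each residually finite $p$ by Lemma \ref{corRFp}), and prove $\bigcap_{s}\ker\psi_{s}=1$ by putting an element of $\langle a^{p^{s_{0}}}\rangle^{\bar G}$ into a normal form using the relators of $K$, computing its image in $H_{s}$ as $a^{p^{s_{0}}q(x_{s})}$ for an explicit integer $q(x_{s})$, and showing via the diophantine Lemma \ref{dioph} that if $q(x_{s})\equiv 0\pmod{p^{s-s_{0}}}$ for every $s$ then the type-3 relators already kill the element in $\bar G$. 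This is exactly where the exponents $y_{1}\cdots y_{r}/\delta$ and $\bar y_{1}\cdots\bar y_{r}/\delta$ are forced, and it is absent from your proposal; as written you have only established $K\subseteq N$.
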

\vspace{3mm}
As a Corollary we provide necessary and sufficient conditions for $G$ to be residually finite $p$-group. Consider $r\geq2$.
\begin{corollary*}
	 The group $G$ is residually finite $p$-group for some prime number $p$ if and only if either  $n_{i}=m_{i}=p^{\sigma_{i}}$ for all  $i\in \{1,\dots,r\}$ or $p=2$ and $|n_{i}|=m_{i}=2^{\sigma_{i}}$  for all  $i\in \{1,\dots,r\}$.
\end{corollary*}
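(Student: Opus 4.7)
The plan is to combine the Theorem, which supplies an explicit description of $(N_p)_\omega(G)$, with the standard equivalence that $G$ is residually finite $p$-group if and only if $(N_p)_\omega(G)=\{1\}$. First I would dispose of case~(1) of the Theorem: if some $\theta_i\in\N$ then $(N_p)_\omega(G)\supseteq\langle a^{p^\xi}\rangle$ is non-trivial, since $a$ has infinite order in any HNN-extension of $\Z$, and so $G$ cannot be residually $p$. Hence it suffices to treat case~(2), where $\sigma_i=\tau_i$ and $p\mid \hat m_i-\hat n_i$ for every $i$, and $(N_p)_\omega(G)$ is the normal closure of the three families displayed in the Theorem.

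For sufficiency, I would verify that under either condition of the Corollary one has $d_i=1$ and $u_i,v_i\in\{\pm 1\}$, and exploit the identity $t_ia^{p^\Sigma}t_i^{-1}=a^{\varepsilon_ip^\Sigma}$ with $\varepsilon_i:=m_i/n_i=u_i/v_i\in\{\pm 1\}$, which holds because $p^{\sigma_i}\mid p^\Sigma$. Conjugation of $a^{p^\Sigma}$ by $w=t_1^{k_1}\cdots t_r^{k_r}$ then multiplies its exponent by the sign $\prod_i\varepsilon_i^{k_i}$, which depends only on the abelianisation of $w$. I would then check, family by family, that each generator of the normal closure is trivial in $G$: the first because every element of $\gamma_2(F_r)$ has zero exponent sum in each $t_i$; the second by a direct computation showing that conjugation of $a^{p^\Sigma}$ by $waw^{-1}$ picks up the sign $\prod_i\varepsilon_i^{k_i}\cdot\prod_i\varepsilon_i^{-k_i}=1$; the third by the identity $y_i\cdot\varepsilon_i^{k_i}=\bar y_i$, which case-splits on the sign of $k_i$ and uses $v_i^2=1$. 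Together these give $(N_p)_\omega(G)=\{1\}$.

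For necessity, assume $(N_p)_\omega(G)=\{1\}$, so every generator of the normal closure is trivial in $G$. Fix an index $j$ and specialise the third family to $k_j=1$ (and $k_i=0$ for $i\neq j$), and separately to $k_j=-1$. The first specialisation produces $t_ja^{p^\Sigma u_j}t_j^{-1}a^{-p^\Sigma v_j}=1$, which by Britton's lemma for the HNN-extension forces $n_j\mid p^\Sigma u_j$, i.e.\ $d_jv_j\mid p^{\Sigma-\sigma_j}u_j$; the second produces $t_j^{-1}a^{p^\Sigma v_j}t_ja^{-p^\Sigma u_j}=1$ and the symmetric divisibility $d_ju_j\mid p^{\Sigma-\sigma_j}v_j$. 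Multiplying these and using $\gcd(u_j,v_j)=1$ forces $d_j^2\mid p^{2(\Sigma-\sigma_j)}$, whence $d_j=1$ because $p\nmid d_j$; then $v_j\mid p^{\Sigma-\sigma_j}$ and $p\nmid v_j$ give $v_j\in\{\pm 1\}$, and symmetrically $u_j\in\{\pm 1\}$. The constraint $p\mid u_j-v_j$ now allows only $u_j=v_j$ (so $n_j=m_j=p^{\sigma_j}$, possibly after the global involution $a\leftrightarrow a^{-1}$) or $p=2$ with $u_j=-v_j$ (so $|n_j|=m_j=2^{\sigma_j}$).

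The main obstacle is the sign bookkeeping in the sufficiency direction, specifically for the third family of generators when $p=2$ and some $v_i=-1$: one must verify the identity $y_i\cdot(u_i/v_i)^{k_i}=\bar y_i$ for every sign of $k_i\in\Z$, and combine these across $i$ to match the ratio $(y_1\cdots y_r)/(\bar y_1\cdots\bar y_r)$ with $\prod_i\varepsilon_i^{k_i}$. Once this matches, both directions of the equivalence reduce to direct applications of the Theorem and Britton's lemma.
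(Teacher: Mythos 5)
Your proof is correct relative to the Theorem as stated, and in the necessity direction it takes a genuinely different route from the paper. The sufficiency half essentially coincides with the paper's (which merely asserts that the defining relations of the normal closure hold in $G$): your explicit observation that under either hypothesis $d_i=1$ and $u_i,v_i\in\{\pm 1\}$, so that conjugation of $a^{p^{\Sigma}}$ by a word $w$ in the stable letters multiplies the exponent by a sign depending only on the exponent sums of $w$, is exactly the verification the paper leaves implicit, and your identity $y_i\varepsilon_i^{k_i}=\bar y_i$ does close the $p=2$ case. For necessity, however, the paper never returns to the Theorem's generating set: it notes that each $\langle t_i,a\rangle$ is a Baumslag--Solitar subgroup, invokes Corollary~\ref{BS} to restrict each pair $(n_i,m_i)$ to one of three shapes, and eliminates the shape $m_i=1$, $n_i\equiv 1\pmod p$ by producing subgroups $\langle t_{\lambda}^{\pm 1}t_{\kappa},a\rangle$ presented as non-residually-$p$ Baumslag--Solitar groups; this needs $r\ge 2$, relies on Moldavanskii's classification as external input, and tacitly assumes those two-generator subgroups are the full Baumslag--Solitar groups rather than proper quotients. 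Your argument instead extracts $d_j=1$ and $u_j,v_j\in\{\pm 1\}$ directly from the triviality of the $k_j=\pm 1$ specialisations of the third family via Britton's lemma, which is self-contained modulo the Theorem and sidesteps the subgroup-identification issue. The one point you should make explicit is that this step is sensitive to the orientation of $y_j,\bar y_j$ in the Theorem: with $y_j=u_j$ (the $\hat m_j$-part) attached to the positive power of $t_j$, the pinch condition $n_j\mid p^{\Sigma}u_j$ really does force $v_j=\pm 1$ as you claim, whereas with the opposite pairing (the one the paper's own proof of the Theorem appears to use, cf.\ the displayed relation $\bar t_1\bar a^{p^{\sigma_1}u_1}\bar t_1^{-1}=\bar a^{p^{\sigma_1}v_1}$) the $k_j=\pm 1$ generators would only give $d_j=1$ and the remaining constraints would have to come from the first two families; so state which convention you are using and why the generator is a genuine pinch.
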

It is interesting to note here that the case where   $m=1$ and $n\equiv 1\pmod p$ is incompatible with the other two cases and so if it appears in a multiple HNN-extension it always gives elements in $(N_p)_{\omega}(G)$.
	\section{Auxiliary results}
		In this section we present some auxiliary results that we need in order to prove our main  Theorem.

	In  \cite{RV}  Raptis and Varsos proved the following.
	\begin{lemma}\label{RV}[Raptis, Varsos]\ \\
		The semi-direct product $G = K\rtimes H$ of a finite $p$-group $K$ by a residually finite $p$-group $H$ is residually finite $p$ if and only if the subgroup of the automorphisms of $K$ induced by the elements of $H$ forms a finite $p$-group of $AutK$. 
	\end{lemma}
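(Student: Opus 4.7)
The plan is to exploit the short exact sequence $1 \to K \to G \to H \to 1$ together with the conjugation homomorphism $\phi : H \to \mathrm{Aut}(K)$, whose image is the subgroup $A$ of induced automorphisms. Writing $L = \ker\phi$, one has $H/L \cong A$, so $[H:L] = |A|$; the hypothesis on $A$ is precisely what will make this index a $p$-power.

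For the forward direction, suppose $G$ is residually finite $p$. Since $K$ is finite, for each $1 \neq k \in K$ one picks a normal subgroup of $G$ of $p$-power index missing $k$, and intersects over the finitely many such $k$ to obtain a normal subgroup $N$ of $G$ of $p$-power index with $N \cap K = \{1\}$. Then $K$ embeds into the finite $p$-group $G/N$ as $KN/N$, and the action of $H$ on $K$ by conjugation factors through the conjugation action of the $p$-subgroup $HN/N$ of $G/N$ on $KN/N$. Hence $A$ is a $p$-subgroup of $\mathrm{Aut}(K)$, and it is finite because $\mathrm{Aut}(K)$ is.

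For the converse, fix $1 \neq g \in G$ and decompose $g = kh$ with $k \in K$ and $h \in H$. If $h \neq 1$, use residual finite-$p$-ness of $H$ to find a normal subgroup $M$ of $H$ of $p$-power index with $h \notin M$; passing to the quotient $G \to G/K \cong H$ shows $KM$ is normal in $G$, the quotient $G/KM \cong H/M$ is a finite $p$-group, and $g \notin KM$ since its image in $H$ is $h \notin M$. If instead $h = 1$, so $g \in K\setminus\{1\}$, the key observation is that $L$ itself is already normal in $G$: it is normal in $H$ and is centralized by $K$ (by definition of $L$), so $gLg^{-1} = L$ for every $g = k'h' \in G$. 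Its index is computed as $[G:L] = [G:KL]\cdot[KL:L] = [H:L]\cdot [K : K\cap L] = |A|\cdot|K|$, a $p$-power, and $K \cap L = \{1\}$ guarantees $g \notin L$. Thus $G/L$ is a finite $p$-group separating $g$ from the identity.

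The conceptual obstacle lies in the sub-case $g \in K$ of the converse: one must produce a normal subgroup of $G$ of $p$-power index whose intersection with $K$ is trivial, while a naive choice from the residual finite-$p$ structure of $H$ alone need not be normal in $G$ nor avoid $K$. The hypothesis that $A$ is a $p$-group is decisive, for it is precisely what turns $L = \ker\phi$ into a subgroup of $p$-power index in $H$, and hence, via the centralization of $K$ by $L$, into a normal subgroup of $G$ of $p$-power index that meets $K$ trivially.
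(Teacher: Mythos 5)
Your proof is correct and complete. Note, however, that the paper does not prove this lemma at all: it is quoted verbatim from Raptis and Varsos \cite{RV} and used as a black box, so there is no in-paper argument to compare against. Your argument is the standard one and both directions check out: in the forward direction, intersecting finitely many normal subgroups of $p$-power index to get $N$ with $N\cap K=\{1\}$ correctly exhibits the induced automorphism group as a homomorphic image of the $p$-group $HN/N$; in the converse, the key point that $L=\ker\phi$ is normal in $G=KH$ (because $L$ is normal in $H$ and centralizes $K$, so $k'lk'^{-1}=(k'lk'^{-1}l^{-1})l=l$) and has index $|A|\cdot|K|$, a $p$-power, is exactly what is needed to separate nontrivial elements of $K$, while elements with nontrivial $H$-component are handled by pulling back normal subgroups of $H$ along $G\to G/K\cong H$.
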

	In the following Lemma we generalize the above result.
	\begin{lemma}\label{genRV}
		Let $K$ be a finite $p$-group and $\phi_{i}$ be isomorphisms between the subgroups $A_{i}$ and $B_{i}$ of $K$, $i\in \{1,\dots,n\}$. Let $G$ be the following multiple HNN-extension:
		$$G=\langle K, t_{i}\hspace{1mm}|\hspace{1mm}t_{i}A_{i}t_{i}^{-1}=\phi_{i}(A_{i})=B_{i}, \hspace{1mm}i\in \{1,\dots,n\} \rangle. $$
		If the isomorphisms $\phi_{i}$ can be extended to automorphisms $\theta_{i}$ of $K$ such that the subgroup $H$ of $AutK$ generated by $\theta_{i}$ is a finite $p$-group, then $G$ is residually finite $p$-group.
	\end{lemma}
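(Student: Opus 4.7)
The plan is to build a finite $p$-quotient of $G$ whose kernel is a free group, and then exploit the residual $p$-finiteness of free groups. First, I would define a surjection $\pi : G \to \widetilde{G} := K \rtimes H$ by $\pi(k) = k$ for $k \in K$ and $\pi(t_i) = \theta_i$. The defining relation $t_i a t_i^{-1} = \phi_i(a)$ for $a \in A_i$ becomes $\theta_i a \theta_i^{-1} = \phi_i(a)$ in $\widetilde{G}$, which holds since $\theta_i$ extends $\phi_i$; hence $\pi$ is well-defined. Moreover $\widetilde{G}$ is a finite $p$-group, being an extension of the finite $p$-group $K$ by the finite $p$-group $H$. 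Write $N := \ker \pi$.

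The core of the argument is the claim that $N$ is a free group. For this I would invoke Bass--Serre theory: view $G$ as the fundamental group of the graph of groups with one vertex carrying $K$ and $n$ loop edges carrying the $A_i$ glued via the $\phi_i$, and let $T$ be the associated Bass--Serre tree. The vertex and edge stabilizers of the $G$-action on $T$ are exactly the conjugates of $K$ and of the $A_i$ respectively. Since $\pi$ restricts to the identity on $K$ and $N$ is normal in $G$, one has $N \cap gKg^{-1} = g(N \cap K)g^{-1} = \{1\}$ for every $g \in G$, and hence also $N \cap gA_ig^{-1} = \{1\}$. Therefore $N$ acts freely on $T$, and the fundamental theorem of Bass--Serre theory gives that $N$ is a free group.

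To finish, take $1 \ne g \in G$. If $\pi(g) \ne 1$, then $\widetilde{G}$ separates $g$. Otherwise $g \in N$, and since $N$ is free it is residually a finite $p$-group; more precisely, the terms $N_k$ of the mod-$p$ lower central series of $N$ form a descending chain of characteristic subgroups of $N$, each of finite $p$-power index in $N$, with trivial intersection. Choose $k$ so that $g \notin N_k$. Because $N_k$ is characteristic in $N$ and $N \trianglelefteq G$, the subgroup $N_k$ is normal in $G$, and $G/N_k$ is an extension of the finite $p$-group $N/N_k$ by the finite $p$-group $\widetilde{G}$, hence itself a finite $p$-group, in which the image of $g$ is non-trivial.

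The delicate step is the identification of $N$ as a free group; everything else is routine once that is in place. It rests on the faithfulness of $\pi$ on $K$ (which transfers to every $G$-conjugate of $K$ by normality of $N$) together with the standard Bass--Serre principle that a group acting on a tree with trivial vertex and edge stabilizers is free.
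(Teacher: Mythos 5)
Your proposal is correct and follows essentially the same route as the paper: the paper also maps $G$ onto the subgroup $\langle r(K),H\rangle$ of the holomorph of $K$ (which is exactly your $K\rtimes H$), observes that the kernel meets $K$ trivially and is therefore free, and concludes that $G$ is free-by-(finite $p$-group), hence residually finite $p$. You merely supply two details the paper leaves implicit, namely the Bass--Serre justification that the kernel is free and the mod-$p$ lower central series argument for the final implication; both are correct.
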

	\begin{proof}
		Let $X=\langle r(K), H \rangle$ be the subgroup in the holomorph of $K$ generated by the right regular representation $r(K)$ of $K$ and the subgroup  $H=\<\theta_{1},\theta_{2},\dots,\theta_{n}\>$ of $AutK$. 
		
		Notice that the group $X$ is a finite $p$-group such that for each $i\in \{1,\dots,n\}$, $\theta_{i}r_{k}\theta_{i}^{-1}=r_{\theta_{i}(k)}$ for every $k\in K$.
		
		We define the map $\psi: G\to X$ that maps $k$ to $r_{k}$, for all $k\in K$ and $t_{i}$ to $\theta_{i}$, $i\in \{1,\dots,n\}$. One can easily see that $\psi$ is a homomorphism since it preserves the relations of $G$. 
		
		Since $Ker\psi\cap K=\{1\}$ and $G$ is a multiple HNN-extension we have that $Ker\psi$ is a free group. Thus $G$ is free-by-(finite $p$-group) and hence $G$ is a residually finite $p$-group. 
	\end{proof}
	\vspace{2mm}
	For the proof of the following Lemma we need Kummer's theorem (see \cite{Kum}) which  calculates the highest power of a prime number $p$ that divides a given binomial coefficient. This is called the $p$-adic valuation of the binomial coefficient and is denoted by $\nu_{p}{n \choose m} $.
	\begin{theorem*}\label{Kummer}[Kummer's Theorem]\ \\
		For given integers $n \geq m \geq  0 $ and a prime number $p$, the $p$-adic valuation $\nu_{p}{n \choose m} $ of the binomial coefficient ${n \choose m} $ is equal to the number of carries when $m$ is added to $n-m$ in base $p$.
	\end{theorem*}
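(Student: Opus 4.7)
The plan is to derive Kummer's theorem from the classical Legendre formula for the $p$-adic valuation of factorials, combined with a digit-sum identity that converts the arithmetic of $\nu_{p}$ into combinatorial information about base-$p$ addition. Throughout, $s_{p}(n)$ denotes the sum of the digits of $n$ when written in base $p$, and $c(a,b)$ denotes the number of carries produced by the schoolbook base-$p$ addition of $a$ and $b$.

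First I would recall Legendre's formula: for a non-negative integer $n$, $\nu_{p}(n!) = \sum_{i \geq 1}\lfloor n/p^{i}\rfloor = (n - s_{p}(n))/(p-1)$. The first equality is standard (counting multiples of $p$, $p^{2}$, $p^{3}, \dots$ among $1, 2, \dots, n$), and the second follows by writing $n = \sum_{j \geq 0} a_{j} p^{j}$, substituting $\lfloor n/p^{i}\rfloor = \sum_{j \geq i} a_{j} p^{j-i}$, interchanging the order of summation and using $\sum_{j=0}^{k-1} p^{j} = (p^{k}-1)/(p-1)$ to collapse the resulting double sum. Applying this to $\binom{n}{m} = n!/\bigl(m!\,(n-m)!\bigr)$ yields
\[
\nu_{p}\!\binom{n}{m} \;=\; \frac{s_{p}(m) + s_{p}(n-m) - s_{p}(n)}{p-1}.
\]
The theorem thus reduces to the core lemma: for any non-negative integers $a,b$, one has $s_{p}(a) + s_{p}(b) - s_{p}(a+b) = (p-1)\,c(a,b)$.

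I would prove this lemma by induction on the number of digits of $\max(a,b)$, tracking the column-wise addition explicitly. At column $i$, let $a_{i}, b_{i}$ be the digits and $\varepsilon_{i} \in \{0,1\}$ the carry-in; if $a_{i} + b_{i} + \varepsilon_{i} < p$ the $i$-th digit of $a+b$ is exactly $a_{i} + b_{i} + \varepsilon_{i}$ and $\varepsilon_{i+1} = 0$, whereas if $a_{i} + b_{i} + \varepsilon_{i} \geq p$ the $i$-th digit is $a_{i} + b_{i} + \varepsilon_{i} - p$ and $\varepsilon_{i+1} = 1$. In the first case the contribution to $s_{p}(a+b) - s_{p}(a) - s_{p}(b)$ at column $i$ matches the incoming $\varepsilon_{i}$ exactly, while in the second case it is $p-1$ less than $\varepsilon_{i}$; summing the column contributions and noting that the final carry, if any, contributes an extra digit $1$ to $s_{p}(a+b)$ gives precisely a total deficit of $(p-1)\,c(a,b)$. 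Substituting $a = m$, $b = n-m$ in the Legendre-based identity yields $\nu_{p}\binom{n}{m} = c(m, n-m)$, which is the claim.

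The main obstacle is the careful bookkeeping in the digit-sum lemma when long runs of digits equal to $p-1$ cause a single incoming carry to propagate through several consecutive columns; one must check that the induction credits every propagated step as a separate carry event contributing its own $p-1$ to the digit-sum deficit, rather than double-counting or missing a terminal carry that creates a new leading digit in $a+b$. Once this lemma is established, the remainder of the argument is a direct algebraic substitution into Legendre's formula with no further combinatorial input required.
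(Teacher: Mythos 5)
The paper does not prove this statement at all: Kummer's theorem is quoted as a classical result with a citation to Kummer's 1852 paper and is then used as a black box in the proof of Lemma 3.3 (the lemma on the order of the automorphism induced by $t$). So there is no proof in the paper to compare against. Your argument is the standard modern derivation and it is correct: Legendre's formula $\nu_{p}(n!)=(n-s_{p}(n))/(p-1)$ reduces the claim to the digit-sum identity $s_{p}(a)+s_{p}(b)-s_{p}(a+b)=(p-1)\,c(a,b)$, and your column-by-column bookkeeping of the carries $\varepsilon_{i}$ establishes that identity; indeed, summing the relation (digit of $a+b$ at column $i$) $=a_{i}+b_{i}+\varepsilon_{i}-p\,\varepsilon_{i+1}$ over all columns and using $\varepsilon_{0}=0$ gives the deficit $(p-1)\sum_{i}\varepsilon_{i+1}=(p-1)\,c(a,b)$ in one stroke, which also disposes of your worry about carry propagation through runs of digits equal to $p-1$ and about the terminal carry creating a new leading digit. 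The proof is complete and self-contained, which is more than the paper supplies for this statement.
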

	\begin{lemma}\label{aut}
		Let $G=\langle a,t \hspace{1mm}|\hspace{1mm}a^{p^{s}}=1 , ta^{n}t^{-1}=a^m\rangle$ , where $p$  is a prime number and $m,n$ are integers such that $p\nmid n,m$ and $p|(m-n)$. Then the automorphism  induced by $t$, has order a power of $p$. 
	\end{lemma}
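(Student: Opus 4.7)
The strategy is to translate the claim into a statement about the multiplicative order of an integer modulo $p^s$, and then control that order by bounding $p$-adic valuations in a binomial expansion, exactly as Kummer's theorem is set up to do.

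First I would identify the automorphism $\phi$ of $\langle a\rangle\cong\mathbb{Z}/p^s\mathbb{Z}$ induced by conjugation with $t$. From $ta^nt^{-1}=a^m$ we read $\phi(a)^n=a^m$. Since $p\nmid n$, the integer $n$ is a unit modulo $p^s$; letting $n^{-1}$ denote its inverse and setting $k\equiv mn^{-1}\pmod{p^s}$, we obtain $\phi(a)=a^k$. Proving that $\phi$ has $p$-power order in $\mathrm{Aut}\langle a\rangle\cong(\mathbb{Z}/p^s\mathbb{Z})^{\ast}$ is therefore equivalent to proving that the multiplicative order of $k$ modulo $p^s$ is a power of $p$.

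The second step uses the hypothesis $p\mid(m-n)$. Reducing the congruence $kn\equiv m\pmod{p^s}$ modulo $p$ and using $m\equiv n\pmod p$ gives $kn\equiv n\pmod p$, hence $k\equiv 1\pmod p$. Write $k=1+p^a c$ with $a\geq 1$ and $p\nmid c$. The goal is then to show that $k^{p^N}\equiv 1\pmod{p^s}$ for all sufficiently large $N$.

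The third and main step is the Kummer estimate. I would expand
$$k^{p^N}\;=\;1+\sum_{j=1}^{p^N}\binom{p^N}{j}p^{aj}c^{j}$$
and apply Kummer's theorem: the number of base-$p$ carries when adding $j$ to $p^N-j$ is exactly $N-\nu_p(j)$, so $\nu_p\!\left(\binom{p^N}{j}p^{aj}c^{j}\right)\geq N-\nu_p(j)+aj$. A short elementary lemma, namely $a(j-1)\geq\nu_p(j)$ for every $j\geq 1$ (which follows from $\nu_p(j)\leq\log_p j\leq j-1$ together with $a\geq 1$), then gives the uniform bound $N+a$ for the valuation of every nonzero summand. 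Consequently $k^{p^N}\equiv 1\pmod{p^{N+a}}$, and choosing $N$ with $N+a\geq s$ forces $\phi^{p^N}=\mathrm{id}$.

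The only delicate point is the prime $p=2$, where the Kummer estimate is tighter but still valid; in any case $(\mathbb{Z}/2^s\mathbb{Z})^{\ast}$ is already a $2$-group, so the conclusion is automatic and the interesting work lies with odd $p$. Apart from handling this case, the principal obstacle is merely setting up the elementary inequality $a(j-1)\geq\nu_p(j)$ cleanly; once that is in place, Kummer's theorem delivers the result directly.
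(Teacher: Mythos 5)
Your proposal is correct and follows essentially the same route as the paper: both identify the conjugation action as $a\mapsto a^{k}$ with $k\equiv mn^{-1}\pmod{p^{s}}$, use $p\mid(m-n)$ to get $k\equiv 1\pmod p$, and then bound the $p$-adic valuations of the binomial terms in $k^{p^{N}}$ via Kummer's theorem. Your version is slightly cleaner in bypassing the paper's intermediate step of first proving $a^{p^{s-r}}$ central, but the core argument is the same.
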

	\begin{proof}
		Let $gcd(m-n,p^s)=p^r$. First we show that $[t,a^{p^{s-r}}]=1$.
		
		Since $gcd(m-n,p^s)=p^r$ we have  $m-n=cp^r$ and thus 
		
\begin{equation}
m=cp^r+n.
\end{equation} 
Since $gcd(n,p^r)=1$, by Bezout's identity there are $x,y\in\Z$ such that 

\begin{equation}
nx+p^{r}y = 1. 
\end{equation}

Therefore $tat^{-1}=ta^{nx+p^{r}y}t^{-1}$ and hence $ tat^{-1}=ta^{nx}t^{-1}ta^{p^{r}y}t^{-1}$.
		
		Raising the last equation to the power $p^{s-r}$ and using the fact that $a^{p^{s}}=1$ we get $$ta^{p^{s-r}}t^{-1}=ta^{nxp^{s-r}}t^{-1}=(ta^{n}t^{-1})^{xp^{s-r}}.$$ Using $(1)$ we have 
\begin{equation}
ta^{n}t^{-1}=a^{cp^r+n}
\end{equation} 
and hence $$ta^{p^{s-r}}t^{-1}=(a^{cp^r+n})^{xp^{s-r}}=a^{nxp^{s-r}}\cdot a^{xcp^s}.$$ But $a^{xcp^s}=1$ and thus using again the equation $(2)$ we have $$ta^{p^{s-r}}t^{-1}=a^{(1-p^ry)p^{s-r}}=a^{p^{s-r}}\cdot a^{-yp^s}=a^{p^{s-r}}.$$ Therefore, we have $[t,a^{p^{s-r}}]=1$ and thus $a^{p^{s-r}}$ is central in $G$.

Since $gcd(n,p^{s-r})=1$ by Bezout's identity there are $z,w\in\Z$ such that 
\begin{equation}
nz+p^{s-r}w=1. 
\end{equation} 

Hence, $tat^{-1}=ta^{nz}t^{-1}(ta^{p^{s-r}}t^{-1})^{w}$. Since $[t,a^{p^{s-r}}]=1$ we have $$tat^{-1}=(ta^{n}t^{-1})^{z}a^{w p^{s-r}} \myex a^{zcp^{r}+zn}\cdot a^{w p^{s-r}}=a^{zcp^{r}}\cdot a^{zn+wp^{s-r}}=a^{zcp^{r}}\cdot a=a^{1+zcp^r}.$$ 
		
		We will prove that $t^{p^{s-r}}at^{-p^{s-r}}=a$ which implies that $t$ has order a power of $p$. 		
		We know that $t^{p^{s-r}}at^{-p^{s-r}}=a^{(1+zcp^r)^{p^{s-r}}}$.
		It is also known that  $$({1+zcp^r)}^{p^{s-r}}=\sum_{i=0}^{p^{s-r}}{p^{s-r} \choose i}(zcp^{r})^{i}.$$
		
		The first term $(i=0)$ is ${p^{s-r} \choose 0}\cdot 1$ and thus the first term of the sum is equal to $1$. 
		We will  prove that for $i\geq1$  every term ${p^{s-r} \choose i}(zcp^{r})^{i}$ is also a multiple of $p^s$ and thus since $a^{p^s}=1$ we will have $a^{(1+zcp^r)^{p^{s-r}}}=a$. For $i=1$ the second term is ${p^{s-r} \choose 1}\cdot zcp^r =p^{s-r}\cdot zcp^r=zcp^s$. 
		
		Assume that $i\geq 2$. If $i<p$ then $p^{s-r}\mid {p^{s-r} \choose i}$, that is ${p^{s-r} \choose i}=\lambda p^{s-r}$. Hence $\lambda p^{s-r}(zcp^{r})^{i}=p^{s+(i-1)r}\cdot \lambda zc$, with $(i-1)>1$. Thus we may assume that $i\geq p$. Using  Kummer's Theorem for the  binomial coefficient ${p^{s-r} \choose i}$ we get $\nu_{p}{p^{s-r} \choose i}={s-r}-\nu_{p}(i)$.  Therefore, $\nu_{p}({p^{s-r} \choose i}(zcp^{r})^{i})={s-r}-\nu_{p}(i)+ri=s-\nu_{p}(i)+(r-1)i$. But $\nu_{p}(i)<ri$ for all $r,i>0$ and thus $\nu_{p}({p^{s-r} \choose i}(zcp^{r})^{i})>s$.  Consequently, the term $a^{{p^{s-r} \choose i}(zcp^{r})^{i}}$ is again equal to $1$. 
		
		Since $a^{({1+zcp^r)}^{p^{s-r}}}=a$ we have  $t^{p^{s-r}}at^{-p^{s-r}}=a$ and hence the automorphism  induced by $t$ has order a power of $p$. 
	\end{proof}
	\begin{remark*}
		In  \cite{RV} Raptis and Varsos proved that the semi-direct product $G=K\rtimes\langle t \rangle$ of a finite $p$-group $K$ by the infinite cyclic group $\langle t \rangle$ is residually finite $p$ if and only if the automorphism $\phi$ of $K$ induced by $t$ has order a power of $p$. The above lemma is a partial generalization of this result.
	\end{remark*}
	\begin{lemma}\label{corRFp}
		Let $p$ be a prime number. Consider the following multiple HNN-extension of a cyclic $p$-group $$G=\langle a, t_{i} \hspace{1mm}|\hspace{1mm}a^{p^s}=1, t_{i}a^{n_{i}}t_{i}^{-1}=a^{m_{i}}, i\in\{1,\dots,r\} \rangle$$ 
		where   $n_{i}=p^{\sigma_{i}}\hat{n}_{i},m_{i}=p^{\sigma_{i}}\hat{m}_{i}$ for $i\in\{1,\dots,r\}$, $s,\sigma_{i}\in \N$ and $p\nmid \hat{n}_{i}, \hat{m}_{i}$. Assume also that $p|(\hat{m}_{i}-\hat{n}_{i})$ for all $i\in\{1,\dots,r\}$ .
		Then $G$ is a residually finite $p$-group.
	\end{lemma}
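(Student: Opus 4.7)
The plan is to invoke Lemma \ref{genRV}: extend each associated isomorphism $\phi_{i}:\langle a^{n_{i}}\rangle\to\langle a^{m_{i}}\rangle$, $a^{n_{i}}\mapsto a^{m_{i}}$, to an automorphism $\theta_{i}$ of the base group $K=\langle a\mid a^{p^{s}}=1\rangle$, in such a way that $H=\langle\theta_{1},\ldots,\theta_{r}\rangle$ is a finite $p$-subgroup of $\mathrm{Aut}(K)$.

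Identify $\mathrm{Aut}(K)$ with the abelian group $(\mathbb{Z}/p^{s}\mathbb{Z})^{*}$, where a unit $u$ acts on $K$ via $a\mapsto a^{u}$. For each $i$, define $\theta_{i}$ to be multiplication by the unit $u_{i}\equiv\hat{m}_{i}\hat{n}_{i}^{-1}\pmod{p^{s}}$, which makes sense because $p\nmid\hat{n}_{i}$. A direct computation shows $u_{i}n_{i}\equiv p^{\sigma_{i}}\hat{n}_{i}\cdot\hat{m}_{i}\hat{n}_{i}^{-1}\equiv m_{i}\pmod{p^{s}}$, so $\theta_{i}(a^{n_{i}})=a^{m_{i}}$ and $\theta_{i}$ extends $\phi_{i}$ (the case $\sigma_{i}\ge s$, in which $\phi_{i}$ is the trivial map, is automatic).

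To see that each $\theta_{i}$ has $p$-power order, apply Lemma \ref{aut} to the auxiliary one-stable-letter HNN-extension $\langle a,t\mid a^{p^{s}}=1,\;ta^{\hat{n}_{i}}t^{-1}=a^{\hat{m}_{i}}\rangle$. Its hypotheses $p\nmid\hat{n}_{i},\hat{m}_{i}$ and $p\mid(\hat{m}_{i}-\hat{n}_{i})$ are precisely what is assumed here, so the automorphism of $K$ produced by that lemma is multiplication by $\hat{m}_{i}\hat{n}_{i}^{-1}\pmod{p^{s}}$, i.e.\ $\theta_{i}$ itself, and it has order a power of $p$. Since $\mathrm{Aut}(K)$ is abelian, a finite collection of $p$-power-order elements generates a finite $p$-subgroup, so $H$ is a finite $p$-group; Lemma \ref{genRV} then concludes that $G$ is residually finite $p$-group.

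The argument is essentially a one-stable-letter-at-a-time reduction to Lemma \ref{aut}; the one point that deserves care is the double role of the unit $u_{i}\equiv\hat{m}_{i}\hat{n}_{i}^{-1}\pmod{p^{s}}$, namely that it simultaneously realises the extension of $\phi_{i}$ and matches the automorphism supplied by Lemma \ref{aut} on the $(\hat{n}_{i},\hat{m}_{i})$-auxiliary HNN-extension. Once this identification is made, the commutativity of $\mathrm{Aut}(K)$ takes care of the jump from single stable letters to $r$ of them.
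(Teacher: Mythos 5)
Your proof is correct and follows essentially the same route as the paper: reduce to Lemma \ref{aut} one stable letter at a time to see that each $t_{i}$ induces (after extension to all of $\langle a\rangle$) an automorphism of $p$-power order, then feed the resulting subgroup of $\mathrm{Aut}(K)$ into Lemma \ref{genRV}. Your version is in fact slightly more explicit than the paper's at two points --- you name the extension as multiplication by $\hat{m}_{i}\hat{n}_{i}^{-1}\pmod{p^{s}}$ and identify it with the automorphism produced by Lemma \ref{aut} for the hatted relation, and you justify why $p$-power-order generators yield a finite $p$-subgroup via the commutativity of $\mathrm{Aut}(K)$ --- but these are refinements of the same argument, not a different one.
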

	\begin{proof}
		For every $i\in\{1,\dots,r\}$ such that $p\nmid n_{i},m_{i}$ and hence $\sigma_{i}$=0,  $a^{n_{i}}$ and $a^{m_{i}}$ are generators of $\langle a \rangle$, and thus, by Lemma \ref{aut}, we have that $t_{i}$ induces an automorphism with order a power of $p$. 
		
	On the other hand, for the cases where  $\sigma_{i}>0$, Lemma \ref{aut} holds for the subgroup generated by $a^{p^{\sigma_{i}}}$. That means that each automorphism of the subgroup $\langle a^{p^{\sigma_{i}}}\rangle$  induced by $t_{i}$ has order a power of $p$. Therefore, we can extend this to an automorphism for the group $\langle a \rangle$ that maps $a^{\hat{n_{i}}}\mapsto a^{\hat{m_{i}}}$ and has also order a power of $p$. 
		
		Consequently, for all $i\in\{1,\dots,r\}$,  $t_{i}$ induce automorphisms on the group $\<a\>$ that form a finite $p$-group and thus, by Lemma \ref{genRV}, we get that $G$ is residually finite $p$-group.
	\end{proof}
	In the following Lemma we prove an auxiliary result which we need for the proof of our main Theorem.
	
\begin{lemma}\label{dioph}[see \cite{Arturo}]\ Consider the following diophantine equations.
\begin{equation}\label{d1}
a_{1}x_{1}+\dots+a_{n}x_{n}=c
\end{equation}
\begin{equation}\label{d2}
gcd(a_{1},\dots,a_{n-1})y+a_{n}x_{n}=c
\end{equation}
		Then every solution of the diophantine equation (\ref{d1}) yields a solution of the diophantine equation (\ref{d2}) and vice versa.
	\end{lemma}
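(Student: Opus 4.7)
The plan is to set $d=\gcd(a_{1},\dots,a_{n-1})$ and prove the two directions separately; the key algebraic fact is that the set $\{a_{1}x_{1}+\dots+a_{n-1}x_{n-1}:x_{i}\in\Z\}$ coincides with $d\Z$, which is just the generalized Bezout identity for several integers.

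For the direction (\ref{d1}) $\Rightarrow$ (\ref{d2}): given integers $x_{1},\dots,x_{n}$ satisfying $a_{1}x_{1}+\dots+a_{n}x_{n}=c$, note that $d\mid a_{i}$ for every $i\le n-1$, so the integer
$$y \;=\; \frac{a_{1}x_{1}+\dots+a_{n-1}x_{n-1}}{d}$$
is well defined. Substituting gives $dy+a_{n}x_{n}=c$, so $(y,x_{n})$ solves (\ref{d2}).

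For the direction (\ref{d2}) $\Rightarrow$ (\ref{d1}): given $(y,x_{n})$ with $dy+a_{n}x_{n}=c$, invoke the generalized Bezout identity to pick integers $z_{1},\dots,z_{n-1}$ with $a_{1}z_{1}+\dots+a_{n-1}z_{n-1}=d$. Setting $x_{i}=yz_{i}$ for $i=1,\dots,n-1$ yields
$$a_{1}x_{1}+\dots+a_{n-1}x_{n-1}+a_{n}x_{n}\;=\;y(a_{1}z_{1}+\dots+a_{n-1}z_{n-1})+a_{n}x_{n}\;=\;dy+a_{n}x_{n}\;=\;c,$$
so $(x_{1},\dots,x_{n})$ solves (\ref{d1}).

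There is essentially no obstacle here beyond invoking the multi-variable Bezout identity; the lemma is a bookkeeping device that will later let the authors reduce multi-variable Diophantine conditions arising from the HNN-structure to two-variable ones, which can then be handled with the usual gcd/Bezout machinery already used in Lemma \ref{aut}.
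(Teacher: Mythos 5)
Your proof is correct and follows essentially the same route as the paper: the forward direction divides out $d$ from the first $n-1$ terms, and the converse uses the multi-variable Bezout identity to write $d$ as an integer combination of $a_{1},\dots,a_{n-1}$ and sets $x_{i}=yz_{i}$, exactly as in the paper's argument.
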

	\begin{proof}
		Let $g= gcd(a_{1},\dots,a_{n-1})$.  Then,  by Bezout's identity,  there are $r_{1},\dots,r_{n-1}\in\Z$ such that $r_{1}a_{1}+\dots+r_{n-1}a_{n-1}=g$.  Let $y,x_n$ be a solution of (\ref{d2}).  Then  
		$$c=gy+a_{n}x_{n}=(r_{1}a_{1}+\dots+r_{n-1}a_{n-1})y+a_{n}x_{n}=a_{1}(r_{1}y)+\dots+a_{n-1}(r_{n-1}y)+a_{n}x_{n}.$$
		Thus $x_{i}=r_{i}y$, $i\in \{1,\dots,n-1\}$, $x_n$ is a solution of (\ref{d1}). 
		
		Conversely,  let $x_i$, $i=1,\ldots, n$ be a solution of (\ref{d1}).  Since $g= gcd(a_{1},\dots,a_{n-1})$ we can write $a_{1}=gk_{1},\dots,a_{n-1}=gk_{n-1}$ where $k_{1},\dots,k_{n-1}\in\Z$. Then we have 
		$$c=a_{1}x_{1}+\dots+a_{n}x_{n}=gk_{1}x_{1}+gk_{n-1}x_{n-1}+a_{n}x_{n}=g(k_{1}x_{1}+\dots + k_{n-1}x_{n-1})+a_{n}x_{n}.$$
		Thus,  $k_{1}x_{1}+\dots + k_{n-1}x_{n-1}, x_n$ is a solution to (\ref{d2}).
	\end{proof}

	\section{Proof of the main Theorem}\label{loop}

We can now prove our main Theorem which calculates the intersection  $(N_{p})_{\om}(G)$ of the group
$$G=\langle a, t_{1},\dots,t_{r}\hspace{1mm}|\hspace{1mm}t_{i}a^{n_{i}}t_{i}^{-1}=a^{m_{i}}, i\in\{1,\dots,r\}\rangle.$$
We use the notation fixed in the introduction.

\vspace{3mm}

Let $f:$ $G\to P_s$ be a homomorphism, where $P_s$ is a $p$-group of order $p^s$ and let us denote by  $\bar{g}$ the image of any $g\in G$ under $f$, i.e.  $f(g)=\bar{g}$.
\begin{enumerate}

	\item[(1)]  Assume without loss of generality that $\theta_{1}\in \N$.  We have to examine the case  $\sigma_{1}\neq \tau_{1}$ and the case $\sigma_{1}=\tau_{1}$ and $p\nmid(\hat{m}_{1}-\hat{n}_{1})$.

	Assume at first that $\sigma_{1}\neq \tau_{1}$ (and assume $\sigma_{1}<\tau_{1}$). Then in $P_s$ we have  $\bar{t}_{1}\bar{a}^{p^{\sigma_{1}}\hat{n}_{1}}\bar{t}_{1}^{-1}=\bar{a}^{p^{\tau_{1}}\hat{m}_{1}}$ and hence $[\bar{t}_{1},\bar{a}^{p^{\sigma_{1}}\hat{n}_{1}}]=\bar{a}^{p^{\sigma_{1}}(p^{\tau_{1}-\sigma_{1}}\hat{m}_{1}-\hat{n}_{1})}$. But $p\nmid \hat{n}_{1}$ and thus $p\nmid ( p^{\tau_{1}-\sigma_{1}}\hat{m}_{1}-\hat{n}_{1})$. Therefore, for $s\geq \sigma_{1}$ we have $\bar{a}^{p^{\sigma_{1}}}\in\gamma_{2}(P_s)$.   But since $\bar{a}^{p^{\sigma_{1}}(p^{\tau_{1}-\sigma_{1}}\hat{m}_{1}-\hat{n}_{1})}=[\bar{t}_{1},\bar{a}^{p^{\sigma_{1}}\hat{n}_{1}}]$ and $\bar{a}^{p^{\sigma_{1}}}\in\gamma_{2}(P_s)$ we have  $\bar{a}^{p^{\sigma_{1}}(p^{\tau_{1}-\sigma_{1}}\hat{m}_{1}-\hat{n}_{1})}\in\gamma_{3}(P_s)$. Again since $\bar{a}^{p^s}=1$ and $p\nmid ( p^{\tau_{1}-\sigma_{1}}\hat{m}_{1}-\hat{n}_{1})$ we have  $\bar{a^{p^{\sigma_{1}}}}\in\gamma_{3}(P_s)$.  Inductively, we get that  $\bar{a^{p^{\sigma_{1}}}}\in\gamma_{c}(P_s)$, for all $c\in \N$ and thus $\bar{a^{p^{\sigma_{1}}}}\in\gamma_{\om}(P_s)$. But since $P_s$ is a $p$-group and thus residually nilpotent we have  $\bar{a^{p^{\sigma_{1}}}}=1$  and thus  $a^{p^{\sigma_{1}}} =a^{p^{\theta_{1}}}\in (N_p)_{\omega}(G)$. 
	
	Now assume that $\theta_{j}\in\N$ for all $j\in C$, where $C$ is a subset (possibly empty) of $\{2,\dots,r\}$ while the remaining $\theta_{j}$, $j\in\{1,\dots,r\}\setminus C$  are equal to infinity. Using the same argument as above we have  $a^{p^{\theta_{j}}}\in (N_p)_{\omega}(G)$, for all $j\in C$. Since $p^\xi |m_{1},m_{j},n_{1},n_{j}$ we have   $$G/\langle a^{p^{\xi}}\rangle^G \cong F_{|C|+1}*G_{1} $$ where $F_{|C|+1}$ is the free group of rank $|C|+1$ and   $$G_{1}=\langle a, t_{i} \hspace{1mm}|\hspace{1mm} a^{p^{\xi}}=1,  t_{i}a^{n_{i}}t_{i}^{-1}=a^{m_{i}}, i\in \{2,\dots,r\}\setminus C \rangle.$$ 
	
	Since $\sigma_{i}=\tau_{i}$ and $p|(\hat{m}_{i}-\hat{n}_{i})$ for all $ i\in \{2,\dots,r\}\setminus C $, by Lemma \ref{corRFp} we have that $G_{1}$ is residually finite $p$-group and therefore so is the free product $F_{|C|+1}*G_{1}$. Hence, we conclude  that $(N_p)_{\omega}(G)=\langle a^{p^{\xi}}\rangle^G$.
	
	Assume now, for our second case that $\sigma_{1}=\tau_{1}=\theta_{1}$ and $p\nmid(\hat{m}_{1}-\hat{n}_{1})$. Then, in $P_s$, we have $[\bar{t}_{1},\bar{a}^{p^{\theta_{1} \hat{n}_{1}}}]=\bar{a}^{p^{\theta_{1} }(\hat{m}_{1}-\hat{n}_{1})}$. Since $p\nmid(\hat{m}_{1}-\hat{n}_{1})$, using the same argument, we have $a^{p^{\theta_{1}}}\in (N_p)_{\omega}(G)$ and as in the previous case we get that $(N_p)_{\omega}(G)=\langle a^{p^{\xi}}\rangle^G$.
	
	\item[(3)] In order to simplify the proof we assume that $i=3$ and 	thus $\theta_{1}=\theta_{2}=\theta_{3}=\infty$.

	For $s>\Sigma=\sigma_{1}+\sigma_{2}+\sigma_{3}$, Since $gcd(p^{\Sigma} u_{1}u_{2}u_{3},p^{s})=p^{\Sigma}$, 	for $s>\Sigma=\sigma_{1}+\sigma_{2}+\sigma_{3}$,
	there are $x_{s},y_{s}\in\Z$ such that: 
\begin{equation}\label{l1}	
p^{\Sigma}u_{1}u_{2}u_{3}x_{s}+p^{s}y_{s}=p^{\Sigma}
\end{equation}

	Therefore the following holds in $P_{s}$: $$\bar{t}_{1}\bar{a}^{p^{\sigma_{1}}u_{1}}\bar{t}_{1}^{-1}=\bar{a}^{p^{\sigma_{1}}v_{1}}\Rightarrow \bar{t}_{1}\bar{a}^{p^{\Sigma}u_{1}u_{2}u_{3}x_{s}}\bar{t}_{1}^{-1}=\bar{a}^{p^{\Sigma}v_{1}u_{2}u_{3}x_{s}}\xRightarrow{(\ref{l1})} \bar{t}_{1}\bar{a}^{p^{\Sigma}}\bar{t}_{1}^{-1}=\bar{a}^{p^{\Sigma}v_{1}u_{2}u_{3}x_{s}}. $$
	Similarly, we get $\bar{t}_{2}\bar{a}^{p^{\Sigma}}\bar{t}_{2}^{-1}=\bar{a}^{p^{\Sigma}u_{1}v_{2}u_{3}x_{s}} $ and $\bar{t}_{3}\bar{a}^{p^{\Sigma}}\bar{t}_{3}^{-1}=\bar{a}^{p^{\Sigma}u_{1}u_{2}v_{3}x_{s}}$.

	We see that  $\bar{t}_{1}$, $\bar{t}_{2}$ and $\bar{t}_{3}$ act as automorphisms to the subgroup $\langle \bar{a}^{p^{\Sigma}}\rangle$ of $\langle \bar{a} \rangle $ in $P_{s}$. Thus, if $w$ is a word in $\gamma_{2}(\<\bar{t}_{1},\bar{t}_{2},\bar{t}_{3}\>)$ then $w\bar{a}^{p^{\Sigma}}w^{-1}=\bar{a}^{p^{\Sigma}}$, which means that  $$[\gamma_{2}(\<t_{1},t_{2},t_{3}\>),a^{p^{\Sigma}}]\in (N_p)_{\omega}(G).$$

	To simplify the notation, let $k_{1}=\k,k_{2}=\l$ and $k_{3}=\mu$ and assume that $\k,\l,\mu\geq 0$. Then we have that since the following relations hold in $P_{s}$
	$$\bar{t}_{1}^{\k}\bar{a}^{p^{\sigma_{1}}u_{1}^{\k}}\bar{t}_{1}^{-\k}=\bar{a}^{p^{\sigma_{1}}v_{1}^{\k}},\bar{t}_{2}^{\l}\bar{a}^{p^{\sigma_{2}}u_{2}^{\l}}\bar{t}_{2}^{-\l}=\bar{a}^{p^{\sigma_{2}}v_{2}^{\l}}, \bar{t}_{3}^{\mu}\bar{a}^{p^{\sigma_{3}}u_{3}^{\mu}}\bar{t}_{3}^{-\mu}=\bar{a}^{p^{\sigma_{3}}v_{3}^{\mu}}$$
	we have
	$$\bar{t}_{1}^{\k}\bar{t}_{2}^{\l}\bar{t}_{3}^{\mu}\bar{a}^{p^{\Sigma}{u_{1}^{\k}u_{2}^{\l}u_{3}^{\mu}}}\bar{t}_{3}^{-\mu}\bar{t}_{2}^{-\l}\bar{t}_{1}^{-\k}=\bar{a}^{p^{\Sigma}{v_{1}^{\k}v_{2}^{\l}v_{3}^{\mu}}}.$$
	But $u_{1},u_{2},u_{3},v_{1},v_{2},v_{3}$ are coprime with $p$ so
	$$\bar{t}_{1}^{\k}\bar{t}_{2}^{\l}\bar{t}_{3}^{\mu}\bar{a}^{p^{\Sigma}\frac{u_{1}^{\k}u_{2}^{\l}u_{3}^{\mu}}{d_{\k,\l,\mu}}}\bar{t}_{3}^{-\mu}\bar{t}_{2}^{-\l}\bar{t}_{1}^{-\k}=\bar{a}^{p^{\Sigma}\frac{v_{1}^{\k}v_{2}^{\l}v_{3}^{\mu}}{d_{\k,\l,\mu}}}$$
	where ${d_{\k,\l,\mu}}=gcd(u_{1}^{\k}u_{2}^{\l}u_{3}^{\mu},v_{1}^{\k}v_{2}^{\l}v_{3}^{\mu})$.
	Working in the same way we also get
	$$\bar{t}_{1}^{-\k}\bar{t}_{2}^{\l}\bar{t}_{3}^{\mu}\bar{a}^{p^{\Sigma}\frac{v_{1}^{\k}u_{2}^{\l}u_{3}^{\mu}}{e_{\k,\l,\mu}}}\bar{t}_{3}^{-\mu}\bar{t}_{2}^{-\l}\bar{t}_{1}^{\k}=\bar{a}^{p^{\Sigma}\frac{u_{1}^{\k}v_{2}^{\l}v_{3}^{\mu}}{e_{\k,\l,\mu}}}$$
	$$\bar{t}_{1}^{\k}\bar{t}_{2}^{-\l}\bar{t}_{3}^{\mu}\bar{a}^{p^{\Sigma}\frac{u_{1}^{\k}v_{2}^{\l}u_{3}^{\mu}}{f_{\k,\l,\mu}}}\bar{t}_{3}^{-\mu}\bar{t}_{2}^{\l}\bar{t}_{1}^{-\k}=\bar{a}^{p^{\Sigma}\frac{v_{1}^{\k}u_{2}^{\l}v_{3}^{\mu}}{f_{\k,\l,\mu}}}$$
	$$\bar{t}_{1}^{\k}\bar{t}_{2}^{\l}\bar{t}_{3}^{-\mu}\bar{a}^{p^{\Sigma}\frac{u_{1}^{\k}u_{2}^{\l}v_{3}^{-\mu}}{g_{\k,\l,\mu}}}\bar{t}_{3}^{\mu}\bar{t}_{2}^{-\l}\bar{t}_{1}^{-\k}=\bar{a}^{p^{\Sigma}\frac{v_{1}^{\k}v_{2}^{\l}u_{3}^{\mu}}{g_{\k,\l,\mu}}}$$

	where $${e_{\k,\l,\mu}}=gcd(v_{1}^{\k}u_{2}^{\l}u_{3}^{\mu},u_{1}^{\k}v_{2}^{\l}v_{3}^{\mu})$$ $${f_{\k,\l,\mu}}=gcd(u_{1}^{\k}v_{2}^{\l}u_{3}^{\mu},v_{1}^{\k}u_{2}^{\l}v_{3}^{\mu})$$ $${g_{\k,\l,\mu}}=gcd(u_{1}^{\k}u_{2}^{\l}v_{3}^{\mu},v_{1}^{\k}v_{2}^{\l}u_{3}^{\mu}).$$
	
	Therefore we have the elements $${t}_{1}^{\k}{t}_{2}^{\l}{t}_{3}^{\mu}a^{p^{\Sigma}\frac{u_{1}^{\k}u_{2}^{\l}u_{3}^{\mu}}{d_{\k,\l,\mu}}}{t}_{3}^{-\mu}{t}_{2}^{-\l}{t}_{1}^{-\k}a^{-p^{\Sigma}\frac{v_{1}^{\k}v_{2}^{\l}v_{3}^{\mu}}{d_{\k,\l,\mu}}}$$  $${t}_{1}^{-\k}{t}_{2}^{\l}{t}_{3}^{\mu}a^{p^{\Sigma}\frac{v_{1}^{\k}u_{2}^{\l}u_{3}^{\mu}}{e_{\k,\l,\mu}}}{t}_{3}^{-\mu}{t}_{2}^{-\l}{t}_{1}^{\k}a^{-p^{\Sigma}\frac{u_{1}^{\k}v_{2}^{\l}v_{3}^{\mu}}{e_{\k,\l,\mu}}}$$ $${t}_{1}^{\k}{t}_{2}^{-\l}{t}_{3}^{\mu}a^{p^{\Sigma}\frac{u_{1}^{\k}v_{2}^{\l}u_{3}^{\mu}}{f_{\k,\l,\mu}}}{t}_{3}^{-\mu}{t}_{2}^{\l}{t}_{1}^{-\k}a^{-p^{\Sigma}\frac{v_{1}^{\k}u_{2}^{\l}v_{3}^{\mu}}{f_{\k,\l,\mu}}}$$ $${t}_{1}^{\k}{t}_{2}^{\l}{t}_{3}^{-\mu}a^{p^{\Sigma}\frac{u_{1}^{\k}u_{2}^{\l}v_{3}^{-\mu}}{g_{\k,\l,\mu}}}{t}_{3}^{\mu}{t}_{2}^{-\l}{t}_{1}^{-\k}a^{-p^{\Sigma}\frac{v_{1}^{\k}v_{2}^{\l}u_{3}^{\mu}}{g_{\k,\l,\mu}}}$$ in  $(N_p)_{\omega}(G)$. 
	
	Now consider again the relation 
		$$\bar{t}_{1}^{\k}\bar{t}_{2}^{\l}\bar{t}_{3}^{\mu}\bar{a}^{p^{\Sigma}{u_{1}^{\k}u_{2}^{\l}u_{3}^{\mu}}}\bar{t}_{3}^{-\mu}\bar{t}_{2}^{-\l}\bar{t}_{1}^{-\k}=\bar{a}^{p^{\Sigma}{v_{1}^{\k}v_{2}^{\l}v_{3}^{\mu}}}.$$
		
		For $s>\Sigma$, since $gcd(p^{\Sigma}{u_{1}^{\k}u_{2}^{\l}u_{3}^{\mu}},p^{s})=p^{\Sigma}$,
		there are $z_{s},w_{s}\in\Z$ such that: $$p^{\Sigma}u_{1}^{\k}u_{2}^{\l}u_{3}^{\mu}x_{s}+p^{s}y_{s}=p^{\Sigma}, \hspace{3mm}\k,\l,\mu\geq 0.$$
		Thus, by working as before we get
		$$\bar{t}_{1}^{\k}\bar{t}_{2}^{\l}\bar{t}_{3}^{\mu}\bar{a}^{p^{\Sigma}}\bar{t}_{3}^{-\mu}\bar{t}_{2}^{-\l}\bar{t}_{1}^{-\k}=\bar{a}^{p^{\Sigma}{v_{1}^{\k}v_{2}^{\l}v_{3}^{\mu}}z_{s}}.$$
		which means that $\bar{t}_{1}^{\k}\bar{t}_{2}^{\l}\bar{t}_{3}$ act as automorphism to the subgroup $\langle \bar{a}^{p^{\Sigma}}\rangle$ of $\langle \bar{a} \rangle $ in $P_{s}$, for all $\k,\l,\mu\geq 0.$ Working the same way we have $\bar{t}_{1}^{-\k}\bar{t}_{2}^{\l}\bar{t}_{3}^{\mu}$, $\bar{t}_{1}^{\k}\bar{t}_{2}^{-\l}\bar{t}_{3}^{\mu}$ and $\bar{t}_{1}^{\k}\bar{t}_{2}^{\l}\bar{t}_{3}^{-\mu}$ also act as automorphisms to the subgroup $\langle \bar{a}^{p^{\Sigma}}\rangle$ and thus 
		$$[\bar{t}_{1}^{i}\bar{t}_{2}^{j}\bar{t}_{3}^{k}a\bar{t}_{1}^{-i}\bar{t}_{2}^{j}\bar{t}_{3}^{-k},a^{p^{\Sigma}}]=1 \hspace{3mm} \text{ for all }i,j,k\in \Z$$ 
		which means that $$[t_{1}^{i}t_{2}^{j}t_{3}^{k}at_{1}^{-i}t_{2}^{j}t_{3}^{-k},a^{p^{\Sigma}}]\in (N_p)_{\omega}(G).$$
	Let $\bar{G}$ be the factor group $G$ by the normal closure of the set 	\begin{center}
		$\begin{Bmatrix}
			[\gamma_{2}(\<t_{1},t_{2},t_{3}\>),a^{p^{\Sigma}}], [t_{1}^{i}t_{2}^{j}t_{3}^{k}at_{1}^{-i}t_{2}^{j}t_{3}^{-k},a^{p^{\Sigma}}]\\{t}_{1}^{\k}{t}_{2}^{\l}{t}_{3}^{\mu}a^{p^{\Sigma}\frac{u_{1}^{\k}u_{2}^{\l}u_{3}^{\mu}}{d_{\k,\l,\mu}}}{t}_{3}^{-\mu}{t}_{2}^{-\l}{t}_{1}^{-\k}a^{-p^{\Sigma}\frac{v_{1}^{\k}v_{2}^{\l}v_{3}^{\mu}}{d_{\k,\l,\mu}}},
			{t}_{1}^{-\k}{t}_{2}^{\l}{t}_{3}^{\mu}a^{p^{\Sigma}\frac{v_{1}^{\k}u_{2}^{\l}u_{3}^{\mu}}{e_{\k,\l,\mu}}}{t}_{3}^{-\mu}{t}_{2}^{-\l}{t}_{1}^{\k}a^{-p^{\Sigma}\frac{u_{1}^{\k}v_{2}^{\l}v_{3}^{\mu}}{e_{\k,\l,\mu}}},\\ {t}_{1}^{\k}{t}_{2}^{-\l}{t}_{3}^{\mu}a^{p^{\Sigma}\frac{u_{1}^{\k}v_{2}^{\l}u_{3}^{\mu}}{f_{\k,\l,\mu}}}{t}_{3}^{-\mu}{t}_{2}^{\l}{t}_{1}^{-\k}a^{-p^{\Sigma}\frac{v_{1}^{\k}u_{2}^{\l}v_{3}^{\mu}}{f_{\k,\l,\mu}}},
			{t}_{1}^{\k}{t}_{2}^{\l}{t}_{3}^{-\mu}a^{p^{\Sigma}\frac{u_{1}^{\k}u_{2}^{\l}v_{3}^{-\mu}}{g_{\k,\l,\mu}}}{t}_{3}^{\mu}{t}_{2}^{-\l}{t}_{1}^{-\k}a^{-p^{\Sigma}\frac{v_{1}^{\k}v_{2}^{\l}u_{3}^{\mu}}{g_{\k,\l,\mu}}}
		\end{Bmatrix}$
	\end{center}  in $G$,  $i,j,k\in \Z$, $\k,\l,\mu\in \N$.
	\vspace{2mm}
	
	We will prove that $\bar{G}$ is a residually finite $p$-group.
	
	We define the natural epimorphisms $\psi_{s}:{\bar{G}} \twoheadrightarrow \bar{G}/\langle a ^{p^{s}}\rangle^{\bar{G}}=H_{s}$ for all $s>\Sigma$. Notice that $H_{s}$ have the following presentation. 
	$$H_{s}=\langle t_{1},t_{2},t_{3},a\hspace{1mm}|\hspace{1mm} a^{p^{s}}=1, t_{1}a^{p^{\sigma_{1}}u_{1}}t_{1}^{-1}=a^{p^{\sigma_{1}}v_{1}},t_{2}a^{p^{\sigma_{2}}u_{2}}t_{2}^{-1}=a^{p^{\sigma_{2}}v_{2}},t_{3}a^{p^{\sigma_{3}}u_{3}}t_{3}^{-1}=a^{p^{\sigma_{3}}v_{3}}\rangle.$$
	
	Moreover, since $p|(\hat{m}_{i}-\hat{n}_{i})$ for all $i=1,2,3$ we have $p|d_{i}(v_{i}-u_{i})$. Since $gcd(p,d_{i})=1$ we conclude that $p|(v_{i}-u_{i})$. Therefore, by Lemma \ref{corRFp} we have $H_{s}$ are residually finite $p$-groups. Again, using the same argument as above we have the following relations hold in $H_{s}$:
	$$ {t}_{1}{a}^{p^{\Sigma}}{t}_{1}^{-1}={a}^{p^{\Sigma}v_{1}u_{2}u_{3}x_{s}},\hspace{2mm}{t}_{2}{a}^{p^{\Sigma}}{t}_{2}^{-1}={a}^{p^{\Sigma}u_{1}v_{2}u_{3}x_{s}},\hspace{2mm} {t}_{3}{a}^{p^{\Sigma}}{t}_{3}^{-1}={a}^{p^{\Sigma}u_{1}u_{2}v_{3}x_{s}}.$$
	
	To prove that $\bar{G}$ is a residually finite $p$-group, it is sufficient to prove that $\bigcap\limits_{s>\Sigma} Ker\psi_{s}$ is trivial. 
	
	Let $1\neq g\in Ker\psi_{s_{0}}=\langle a^{p^{s_{0}}}\rangle^{\bar{G}} $, for some $s_{0}$ fixed. We will prove that there is some $s>s_{0}$ such that $g\notin Ker\psi_{s}$  and thus the intersection is trivial.
	
	Since $g\in \langle a^{p^{s_{0}}}\rangle^{\bar{G}} $, $g$ can be written in the form, 
	$$g=g_{1}a^{e_{1}p^{s_{0}}}g_{1}^{-1}\cdot g_{2}a^{e_{2}p^{s_{0}}}g_{2}^{-1}\cdots g_{k}a^{e_{k}p^{s_{0}}}g_{k}^{-1}, \hspace{3mm} \text{with } g_{i}\in \bar{G}.$$
	Using the relations that hold in $\bar{G}$ we can rewrite $g$ as follows. 
	$$g=t_{1}^{\k_{1}}t_{2}^{\l_{1}}t_{3}^{\mu_{1}}a^{\e_{1}p^{s_{0}}}t_{3}^{-\mu_{1}}t_{2}^{- \l_{1}}t_{1}^{-\k_{1}}\cdot \cdots t_{1}^{\k_{N}}t_{2}^{\l_{N}}t_{3}^{\mu_{N}}a^{\e_{N}p^{s_{0}}}t_{3}^{-\mu_{N}}t_{2}^{- \l_{N}}t_{1}^{-\k_{N}}.$$
	Since an element is trivial if and only if every conjugate of the element is trivial, we may take appropriate conjugate of $g$ such that $\k_{i},\lambda_{i},\mu_{i}>0$ for all $i\in\{1,\dots,N\}$. Moreover we may assume that $g$ is reduced in the sense that $(\k_{i},\l_{i},\mu_{i})$ are discrete.
	
	Then, using the relations of $H_{s}$ we get $\psi_{s}(g)=a^{p^{s_{0}}\cdot q(x_{s})}$, where $q(x_{s})$ is the following polynomial:
	
	$$ q(x_{s})= \sum\limits_{i=1}^{N}\e_{i}(v_{1}u_{2}u_{3})^{\k_{i}}(u_{1}v_{2}u_{3})^{\l_{i}}(u_{1}u_{2}v_{3})^{\mu_{i}}x_{s}^{\k_{i}+\l_{i}+\mu_{i}}.$$

	It suffices that there exists $s>s_{0}$ such that  $q(x_{s})\not\equiv0\pmod{p^{s-s_{0}}}$. We will prove that for the solutions of this modular congruence the element $g$ becomes trivial in $\bar{G}$. 
	
	Assume that 	$$  \sum\limits_{i=1}^{N}\e_{i}(v_{1}u_{2}u_{3})^{\k_{i}}(u_{1}v_{2}u_{3})^{\l_{i}}(u_{1}u_{2}v_{3})^{\mu_{i}}x_{s}^{\k_{i}+\l_{i}+\mu_{i}}\equiv0\pmod{p^{s-s_{0}}} \text{ for every } s>s_{0} \Leftrightarrow $$
	
\begin{equation}\label{l2}
\sum\limits_{i=1}^{N}\e_{i}v_{1}^{\k_{i}}v_{2}^{\l_{i}}v_{3}^{\mu_{i}}u_{1}^{\l_{i}+\mu_{i}}u_{2}^{\k_{i}+\mu_{i}}u_{3}^{\k_{i}+\l_{i}}x_{s}^{\k_{i}+\l_{i}+\mu_{i}}\equiv0\pmod{p^{s-s_{0}}}
\end{equation}

	We now multiply (\ref{l2}) by $u_{1}^{\k_{\xi}}\cdot u_{2}^{\l_{\zeta}}u_{3}^{\mu_{\theta}}$, where $\k_{\xi}=\max\{\k_{i}\}_{i=1}^{N}$, $\l_{\zeta}=\max\{\l_{i}\}_{i=1}^{N}$ and $\mu_{\theta}=\max\{\mu_{i}\}_{i=1}^{N}$. We know that $u_{1}u_{2}u_{3}x_{s}+p^{s-\Sigma}y_{s}=1$. Thus we have $(u_{1}u_{2}u_{3}x_{s})^{\k_{i}+\l_{i}+\mu_{1}}=(1-p^{s}y_{s})^{\k_{i}+\l_{i}+\mu_{i}}$. Notice that the right-hand side of this equality is equal to $1\pmod{p^{s}}$. Then $(\ref{l2})$ is equivalent to the following:

	$$ \sum\limits_{i=1}^{N}\e_{i}v_{1}^{\k_{i}}v_{2}^{\l_{i}}v_{3}^{\mu_{i}}u_{1}^{\k_{\xi}-\k_{i}}u_{2}^{\l_{\zeta}-\l_{i}}u_{3}^{\mu_{\theta}-\mu_{i}}=0 \hspace{2mm} \Leftrightarrow$$
\begin{equation}\label{l3}
 \sum\limits_{i=1}^{N}\e_{i}v_{1}^{\k_{i}-\k_{\nu}}v_{2}^{\l_{i}-\l_{\sigma}}v_{3}^{\mu_{i}-\mu_{\rho}}u_{1}^{\k_{\xi}-\k_{i}}u_{2}^{\l_{\zeta}-\l_{i}}u_{3}^{\mu_{\theta}-\mu_{i}}=0
 \end{equation}
	
	where $\k_{\nu}=\min\{\k_{i}\}_{i=1}^{N}$, $\l_{\sigma}=\min\{\l_{i}\}_{i=1}^{N}$ and $\mu_{\rho}=\min\{\mu_{i}\}_{i=1}^{N}$.
	\vspace{2mm}

	We define $w_{i}$ to be $w_{i}=v_{1}^{\k_{i}-\k_{\nu}}v_{2}^{\l_{i}-\l_{\sigma}}v_{3}^{\mu_{i}-\mu_{\rho}}u_{1}^{\k_{\xi}-\k_{i}}u_{2}^{\l_{\zeta}-\l_{i}}u_{3}^{\mu_{\theta}-\mu_{i}}$.

	If $\tilde{\Omega}=gcd(w_{1},\dots,w_{N-1})$, then according to Lemma \ref{dioph}, solving the above equation is equivalent to solving the following linear homogenous diophantine equation:
\begin{equation}\label{l4}
\tilde{\Omega}y+\e_{N}w_{N}=0.
\end{equation}
	Let $\Omega=gcd(\tilde{\Omega},w_{N})=gcd(w_{1},\dots,w_{N})$. Then the solutions of (\ref{l4}) are
	$$\e_{N}=-\dfrac{\tilde{\Omega}}{\Omega}\cdot \tau \hspace{3mm}\text{ and }\hspace{3mm}y=\dfrac{w_{N}}{\Omega}\cdot \tau, \hspace{2mm}\tau\in\Z .$$
	Since $\tilde{\Omega}=gcd(w_{1},\dots,w_{N-1})$ there are $r_{1},\dots,r_{N-1}\in\Z$ such that 
\begin{equation}\label{l5}
r_{1}w_{1}+r_{2}w_{2}+\cdots+ r_{N-1}w_{N-1}=\tilde{\Omega}.
\end{equation}
	Therefore the solutions of the original diophantine equations are
	$$\e_{i}=yr_{i} \hspace{1mm}\text{ for all }i\in\{1,\dots,N-1\} \hspace{2mm} \text{ and }\hspace{2mm}\e_{N}=-\dfrac{\tilde{\Omega}}{\Omega}\cdot \tau.$$
	
	We show that for the above $\e_{i}$, the element $g$ is trivial:

	$$g=t_{1}^{\k_{1}}t_{2}^{\l_{1}}t_{3}^{\mu_{1}}a^{\e_{1}p^{s_{0}}}t_{3}^{-\mu_{1}}t_{2}^{- \l_{1}}t_{1}^{-\k_{1}}\cdot \cdots t_{1}^{\k_{N}}t_{2}^{\l_{N}}t_{3}^{\mu_{N}}a^{\e_{N}p^{s_{0}}}t_{3}^{-\mu_{N}}t_{2}^{- \l_{N}}t_{1}^{-\k_{N}}.$$

	But $g$ is trivial if and only if every conjugate of $g$ is trivial. It suffices to show that the following element is trivial.
	$$\prod\limits_{i=1}^{N-1}t_{1}^{\k_{i}-\k_{N}}t_{2}^{\l_{i}-\l_{N}}t_{3}^{\mu_{i}-\mu_{N}}a^{p^{s_{0}}\e_{i}}t_{3}^{\mu_{N}-\mu_{i}}t_{2}^{\l_{N}- \l_{i}}t_{1}^{\k_{N}-\k_{i}}\cdot  a^{\e_{N}p^{s_{0}}}=$$	
	$$\prod\limits_{i=1}^{N-1}t_{1}^{\k_{i}-\k_{N}}t_{2}^{\l_{i}-\l_{N}}t_{3}^{\mu_{i}-\mu_{N}}a^{p^{s_{0}}r_{i}\frac{w_{N}}{\Omega}\cdot \tau}t_{3}^{\mu_{N}-\mu_{i}}t_{2}^{\l_{N}- \l_{i}}t_{1}^{\k_{N}-\k_{i}}\cdot  a^{-p^{s_{0}}\frac{\tilde{\Omega}}{\Omega}\cdot \tau }.$$
	
	We use the relation (\ref{l5}) as follows.

	$$\tilde{\Omega}=r_{1}w_{1}+r_{2}w_{2}+\cdots+ r_{N-1}w_{N-1}\Leftrightarrow$$
\begin{equation}\label{l6}
\dfrac{\tilde{\Omega}}{\Omega}\cdot \tau =r_{1}\dfrac{w_{1}}{{\Omega}}\tau+r_{2}\dfrac{w_{2}}{{\Omega}}\tau+\cdots+ r_{N-1}\dfrac{w_{N-1}}{{\Omega}}\tau
\end{equation}
	
	It is now sufficient to prove that 
	$$t_{1}^{\k_{i}-\k_{N}}t_{2}^{\l_{i}-\l_{N}}t_{3}^{\mu_{i}-\mu_{N}}a^{p^{s_{0}}\frac{w_{N}}{\Omega}}t_{3}^{\mu_{N}-\mu_{i}}t_{2}^{\l_{N}- \l_{i}}t_{1}^{\k_{N}-\k_{i}}=a^{p^{s_{0}}\frac{w_{i}}{\Omega}} \text{ for all } i\in\{1,\dots,N-1\}$$
	Assume that $\k_{i}\geq \k_{N}$, $\l_{i}\geq \l_{N}$ and $\mu_{i}\leq \mu_{N}$.
	
	We know that the following equality holds in $\bar{G}$: 

	$$t_{1}^{\k_{i}-\k_{N}}t_{2}^{\l_{i}-\l_{N}}t_{3}^{\mu_{i}-\mu_{N}}a^{p^{\Sigma}\frac{u_{1}^{\k_{i}-\k_{N}}u_{2}^{\l_{i}-\l_{N}}v_{3}^{\mu_{N}-\mu_{i}}}{d}}t_{3}^{\mu_{i}-\mu_{N}}t_{2}^{\l_{N}-\l_{i}}t_{1}^{\k_{N}-\k_{i}}=a^{p^{\Sigma}\frac{v_{1}^{\k_{i}-\k_{N}}v_{2}^{\l_{i}-\l_{N}}u_{3}^{\mu_{N}-\mu_{i}}}{d}},$$
	where $d=gcd(u_{1}^{\k_{i}-\k_{N}}u_{2}^{\l_{i}-\l_{N}}v_{3}^{\mu_{N}-\mu_{i}},v_{1}^{\k_{i}-\k_{N}}v_{2}^{\l_{i}-\l_{N}}u_{3}^{\mu_{N}-\mu_{i}})$.
	
	\vspace{2mm}
	Let $z=v_{1}^{\k_{N}-\k_{\nu}}v_{2}^{\l_{N}-\l_{\sigma}}v_{3}^{\mu_{i}-\mu_{\rho}}u_{1}^{\k_{\xi}-\k_{i}}u_{2}^{\l_{\zeta}-\l_{i}}u_{3}^{\mu_{\theta}-\mu_{N}}$.

	Then we have  
	$u_{1}^{\k_{i}-\k_{N}}u_{2}^{\l_{i}-\l_{N}}v_{3}^{\mu_{N}-\mu_{i}}\cdot z=w_{N}$,  $v_{1}^{\k_{i}-\k_{N}}v_{2}^{\l_{i}-\l_{N}}u_{3}^{\mu_{N}-\mu_{i}}\cdot z=w_{i}$.
	
	 Moreover, $zd=gcd(w_{N},w_{i})$.
	Hence the following relation holds: 

\begin{equation}\label{l7}	
t_{1}^{\k_{i}-\k_{N}}t_{2}^{\l_{i}-\l_{N}}t_{3}^{\mu_{i}-\mu_{N}}a^{p^{\Sigma}\frac{w_{N}}{gcd(w_{i},w_{N})}}t_{3}^{\mu_{i}-\mu_{N}}t_{2}^{\l_{N}-\l_{i}}t_{1}^{\k_{N}-\k_{i}}=a^{p^{\Sigma}\frac{w_{i}}{gcd(w_{i},w_{N})}}
\end{equation}
	
	But since $\Omega|gcd(w_{i},w_{N})$ there exists $\Omega'\in \N$ such that $gcd(w_{i},w_{N})=\Omega\cdot \Omega'$
	Therefore we have

	$$t_{1}^{\k_{i}-\k_{N}}t_{2}^{\l_{i}-\l_{N}}t_{3}^{\mu_{i}-\mu_{N}}a^{p^{s_{0}}\frac{w_{N}}{\Omega}}t_{3}^{\mu_{N}-\mu_{i}}t_{2}^{\l_{N}- \l_{i}}t_{1}^{\k_{N}-\k_{i}}=$$	
	$$(t_{1}^{\k_{i}-\k_{N}}t_{2}^{\l_{i}-\l_{N}}t_{3}^{\mu_{i}-\mu_{N}}a^{p^{\Sigma}\frac{w_{N}}{gcd(w_{i},w_{N})}}t_{3}^{\mu_{N}-\mu_{i}}t_{2}^{\l_{N}- \l_{i}}t_{1}^{\k_{N}-\k_{i}})^{p^{s_{0}-\Sigma}\cdot \Omega'}\overset{(\ref{l7})}{=}$$
	$$a^{p^{s_{0}}\frac{w_{i}}{gcd(w_{i},w_{N})}\cdot \Omega'}=a^{p^{s_{0}}\frac{w_{i}}{\Omega}}.$$

\par	We work in the same way for the remaining cases. 
	Therefore there exists $s>s_{0}$ such that $q(x_{s})\not\equiv0\pmod{p^{s-s_{0}}}$ and thus we can find an $s$ big enough such that $\psi_{s}(s)$ is not trivial. As a consequence we get that $\bar{G}$ is a residually finite $p$-group and thus we proved our theorem. \qed
\end{enumerate}
\vspace{2mm}
\subsection{Proof of the Corollary}
\vspace{2mm}

We will now prove that $G$ is residually finite $p$-group if and only if either  $n_{i}=m_{i}=p^{\sigma_{i}}$ for all  $i\in \{1,\dots,r\}$ or $p=2$ and $|n_{i}|=m_{i}=2^{\sigma_{i}}$  for all  $i\in \{1,\dots,r\}$.

	Let $p\neq 2$. Consider the case where $t_{i}a^{p^{\sigma_{i}}}t_{i}^{-1}=a^{{p^{\sigma_{i}}}}$ for $i\in\{1,\dots,r\}$. Then $(N_{p})_{\om}(G)$ is trivial since the relations $$[\gamma_{2}(F_{r}),a^{p^{\Sigma}}]=1, [t_{1}^{k_{1}}\cdots t_{r}^{k_{r}}at_{r}^{-k_{r}}\cdots t_{1}^{-k_{1}},a^{p^{\Sigma}}]=1 \text{ and }  t_{1}^{k_{1}}\cdots t_{r}^{k_{r}}a^{p^{\Sigma}} t_{1}^{-k_{1}}\cdots t_{r}^{-k_{r}}=a^{p^{\Sigma}}$$ hold in the group.
	Simillarly, if $p=2$ and  $t_{i}a^{|2^{\sigma_{i}}|}t_{i}^{-1}=a^{{2^{\sigma_{i}}}}$ for all $i\in\{1,\dots,r\}$ then again $(N_{2})_{\om}(G)$ is trivial.

For the converse, we observe that each $\<t_{i},a\>$ is a subgroup of $G$ isomorphic to a Baumslag-Solitar group and thus every $\<t_{i},a\>$ must be residually finite $p$-group. Assume first that $p\neq2$. Then, Corollary \ref{BS} implies that for each $i$, one of the following two cases holds.  Either $m_{i}=1$ and $n_{i}\equiv 1\pmod p$ or $n_{i}=m_{i}=p^{\sigma_{i}}$.
Therefore, it suffices to show that if there exists $i\in\{1,\dots,r\}$ such that  $m_{i}=1$ and $n_{i}\equiv 1\pmod p$ then $G$ is not a residually finite $p$-group.

Let $\k\in\{1,\dots,r\}$ with $t_{\k}a^{n_{\k}}t_{\k}^{-1}=a$,  and $p|(n_{\k}-1$).  Choose $\l\in\{1,\dots,r\}$ with $\l\neq\k$ such that  $\<t_{\l},a\>$ also satisfies the condition $m_{\l}=1$ and $n_{\l}\equiv 1\pmod p$. Then  $G$ contains the Baumslag-Solitar group $$H_{\k,\l}=\<t_{\l}^{-1} t_{\k},a\>=\<t_{\l}^{-1}t_{\k}a^{n_{\k}}t_{\k}^{-1}t_{\l}=a^{n_{\l}}\>$$ that does not satisfy any of the conditions described in Corollary \ref{BS} and hence it is not residually finite $p$-group. Now, since $H_{\k,\l}$ is a subgroup of $G$, $G$ is  not residually finite $p$-group.

Now assume that no such $\l$ exist. Then $\<t_{\l},a\>$  must satisfy the second  condition  of Corollary \ref{BS},  which  means that $t_{\l}a^{p^{\sigma_{\l}}}t_{\l}^{-1}=a^{p^{\sigma_{\l}}}$ for all $\l\in \{1,\ldots r\}$ with $\l\neq\k$. Then consider the Baumslag-Solitar subgroup $$K_{\k,\l}=\<t_{\l} t_{\k},a\>=\<t_{\l}t_{\k}a^{p^{\sigma_{\l}}n_{\k}}t_{\k}^{-1}t_{\l}^{-1}=a^{p^{\sigma_{\l}}}\>.$$
Again $K_{\k,\l}$ does not satisfy the conditions of Corollary \ref{BS} and hence it is not residually finite $p$-group. Thus, $G$ is  not residually finite $p$-group.
Therefore, if $p\neq2$ and there exists $i\in\{1,\dots,r\}$ such that  $m_{i}=1$ and $n_{i}\equiv 1\pmod p$ we have $G$ is not residually finite $p$-group.

Now assume that $p=2$. Then, Corollary \ref{BS} implies that for each $i$ either $m_{i}=1$ and $n_{i}\equiv 1\pmod 2$ or $n_{i}=-m_{i}=-2^{\sigma_{i}}$ or $n_i=m_i=2^{\sigma_{i}}$.
Again,  it suffices to show that if there exists $i\in\{1,\dots,r\}$ such that  $m_{i}=1$ and $n_{i}\equiv 1\pmod 2$ then $G$ is not a residually finite $p$-group.

Assume that for some ${\k}\in\{1,\dots,r\}$ we have $t_{{\k}}a^{n_{{\k}}}t_{{\k}}^{-1}=a$, with $2|(n_{{\k}}-1$). Consider now some ${\l}\in\{1,\dots,r\}$ for which $\<t_{{\l}},a\>$ is also a residually finite $p$-group. 

If  $\<t_{{\l}},a\>$ satisfies the condition $m_{{\l}}=1$ and $n_{{\l}}\equiv 1\pmod 2$ or $t_{{\l}}a^{2^{\sigma_{{\l}}}}t_{{\l}}^{-1}=a^{2^{\sigma_{{\l}}}}$ then we can work as in case $p\neq2$.  Thus, we may assume that there exist $\l\in\{1,\ldots,r\}$ with $\l\neq k$ such that $t_{{\l}}a^{-2^{\sigma_{{\l}}}}t_{{\l}}^{-1}=a^{2^{\sigma_{{\l}}}}$.  Consider the Baumslag-Solitar subgroup $$M_{{\k},{\l}}=\<t_{{\l}} t_{{\k}},a\>=\<t_{{\l}}t_{{\k}}a^{-n_{{\k}}2^{\sigma_{{\l}}}}t_{{\k}}^{-1}t_{{\l}}^{-1}=a^{2^{\sigma_{{\l}}}}\>.$$
Then $M_{\k,\l}$ does not satisfy any of the conditions described in Corollary \ref{BS} and hence it is not residually finite $p$-group. Thus, $G$ is  not residually finite $p$-group.
Therefore, if $p=2$ and there exists $i\in\{1,\dots,r\}$ such that  $m_{i}=1$ and $n_{i}\equiv 1\pmod p$ then $G$ is not a residually finite $p$-group. Thus the result is proven. 

\vspace{5mm}

\end{document}